\documentclass[11pt]{amsart}
\usepackage{amssymb}
\usepackage{mathrsfs}
\usepackage{enumerate}
\makeatletter
\@namedef{subjclassname@2010}{%
  \textup{2010} Mathematics Subject Classification}
\makeatother
\newtheorem{lemma}{Lemma}
\newtheorem{theorem}{Theorem}[section]
\newtheorem{proposition}{Proposition}
\newtheorem{corollary}{Corollary}
\newtheorem{definition}{Definition}

\newtheorem{example}{Example}

\begin{document}

\title{GENERALIZED FUSION FRAMES IN HILBERT SPACES}

\author[V. Sadri]{Vahid Sadri}
\address{Department of Mathematics, Faculty of Tabriz  Branch,\\ Technical and Vocational University (TUV), East Azarbaijan
, Iran}
\email{vahidsadri57@gmail.com}

\author[Gh. Rahimlou]{GHOLAMREZA RAHIMLOU}
\address{Department of Mathematics, Faculty of Tabriz  Branch,\\ Technical and Vocational University (TUV), East Azarbaijan
, Iran}
\email{grahimlou@gmail.com}

\author[R. Ahmadi]{Reza Ahmadi}
\address{Institute of Fundamental Sciences\\University of Tabriz\\, Iran\\}
\email{rahmadi@tabrizu.ac.ir}

\author[R. Zarghami Farfar]{Ramazan Zarghami Farfar}
\address{Dapartement of Geomatic and Mathematical\\Marand Faculty of Technical and Engineering\\University of Tabriz\\, Iran\\}
\email{zarghamir@gmail.com}


\begin{abstract}
After introducing g-frames and fusion frames by Sun  and Casazza, combining these frames together is an interesting topic for research. In this paper, we introduce the generalized fusion frames or g-fusion frames for Hilbert spaces and give characterizations of these frames from the viewpoint of closed range and g-fusion frame sequences. Also, the canonical dual g-fusion frames are presented and we  introduce  Parseval g-fusion frames.
\end{abstract}

\subjclass[2010]{Primary 42C15; Secondary 46C99, 41A58}

\keywords{Fusion frame, g-fusion frame, Dual  g-fusion frame, g-fusion frame sequence.}

\maketitle

\section{Introduction}
During the past few years, the theory of frames have been growing
rapidly and  new topics about them are discovered almost every year.
For example, generalized frames (or g-frames), subspaces of frames
(or fusion frames), continuous frames (or c-frames), $k$-frames,
controlled frames and  the combination of each two of them, lead
 to c-fusion frames, g-c-frames, c-g-frames, c$k$-frames,
c$k$-fusion frames and etc. The purpose of this paper is to
introduce and review some of the generalized fusion frames (or g-fusion
frames) and their operators. Then, we will get some useful
propositions about these frames and finally, we will study  g-fusion
frame sequences.

Throughout this paper, $H$ and $K$ are separable Hilbert spaces and $\mathcal{B}(H,K)$ is the collection of all the bounded linear operators of $H$ into $K$. If $K=H$, then $\mathcal{B}(H,H)$ will be denoted by $\mathcal{B}(H)$. Also, $\pi_{V}$ is the orthogonal projection from $H$ onto a closed subspace $V\subset H$ and  $\lbrace H_j\rbrace_{j\in\Bbb J}$ is a sequence of Hilbert spaces where $\Bbb J$ is a subset of $\Bbb Z$. It is easy to check that if $u\in\mathcal{B}(H)$ is an invertible operator, then (\cite{ga})
$$\pi_{uV}u\pi_{V}=u\pi_{V}.$$
\begin{definition}\textbf{(frame)}.
Let $\{f_j\}_{j\in\Bbb J}$ be a sequence of members of $H$. We say that $\{f_j\}_{j\in\Bbb J}$ is a  frame  for $H$ if there exists $0<A\leq B<\infty$ such that for each $f\in H$
\begin{eqnarray*}
A\Vert f\Vert^2\leq\sum_{j\in\Bbb J}\vert\langle f,f_j\rangle\vert^2\leq B\Vert f\Vert^2.
\end{eqnarray*}
\end{definition}
\begin{definition}\textbf{(g-frame)}
A family $\lbrace \Lambda_j\in\mathcal{B}(H,H_j)\rbrace_{j\in\Bbb J}$ is called a g-frame for $H$ with respect to $\lbrace H_j\rbrace_{j\in\Bbb J}$,  if there exist $0<A\leq B<\infty$ such that
\begin{equation} \label{1}
  A\Vert f\Vert^2\leq\sum_{j\in\Bbb J}\Vert \Lambda_{j}f\Vert^2\leq B\Vert f\Vert^2, \ \  f\in H.
\end{equation}
\end{definition}
\begin{definition}\textbf{(fusion frame)}.
Let $\{W_j\}_{j\in\Bbb J}$ be a family of closed subspaces of $H$ and $\{v_j\}_{j\in\Bbb J}$ be a family of weights (i.e. $v_j>0$ for any $j\in\Bbb J$). We say that $(W_j, v_j)$ is a fusion frame  for $H$ if there exists $0<A\leq B<\infty$ such that for each $f\in H$
\begin{eqnarray*}
A\Vert f\Vert^2\leq\sum_{j\in\Bbb J}v^{2}_j\Vert \pi_{W_j}f\Vert^2\leq B\Vert f\Vert^2.
\end{eqnarray*}
\end{definition}
If an operator $ u$ has closed range, then there exists a right-inverse operator $u^ \dagger$ (pseudo-inverse of $u$) in the following sences (see \cite{ch}).

\begin{lemma}\label{l1}
Let $u\in\mathcal{B}(K,H)$  be a bounded operator with closed range $\mathcal{R}_{u}$. Then there exists a bounded operator $u^\dagger \in\mathcal{B}(H,K)$ for which
$$uu^{\dagger} x=x, \ \ x\in \mathcal{R}_{u}.$$
\end{lemma}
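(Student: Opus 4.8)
The plan is to build $u^\dagger$ from the inverse of $u$ restricted to the orthogonal complement of its kernel, extended by zero on the orthogonal complement of the range. First I would observe that the kernel $\mathcal{N}(u)=\ker u$ is a closed subspace of $K$, so we may form the orthogonal decomposition $K=\mathcal{N}(u)\oplus\mathcal{N}(u)^\perp$. The restriction $\tilde u:=u|_{\mathcal{N}(u)^\perp}$ then maps $\mathcal{N}(u)^\perp$ onto $\mathcal{R}_u$ and is injective, since any vector of $\mathcal{N}(u)^\perp$ annihilated by $u$ lies in $\mathcal{N}(u)\cap\mathcal{N}(u)^\perp=\{0\}$. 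Thus $\tilde u:\mathcal{N}(u)^\perp\to\mathcal{R}_u$ is a bounded linear bijection.

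Next I would invoke the hypothesis that $\mathcal{R}_u$ is closed. Being a closed subspace of the Hilbert space $H$, the range $\mathcal{R}_u$ is itself complete, and $\mathcal{N}(u)^\perp$ is a closed subspace of $K$, hence also complete. Therefore $\tilde u$ is a bounded bijection between Banach spaces, and the bounded inverse theorem (a consequence of the open mapping theorem) guarantees that $\tilde u^{-1}:\mathcal{R}_u\to\mathcal{N}(u)^\perp$ is bounded. This is the crucial step where closedness of the range is indispensable: without it $\tilde u^{-1}$ need not be continuous, and the construction would fail to land inside $\mathcal{B}(H,K)$.

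Finally I would extend to all of $H$ using the decomposition $H=\mathcal{R}_u\oplus\mathcal{R}_u^\perp$, setting
$$u^\dagger:=\tilde u^{-1}\,\pi_{\mathcal{R}_u},$$
so that $u^\dagger y=\tilde u^{-1}(\pi_{\mathcal{R}_u}y)$ for every $y\in H$. Since both $\pi_{\mathcal{R}_u}$ and $\tilde u^{-1}$ are bounded, the composition $u^\dagger$ belongs to $\mathcal{B}(H,K)$. For $x\in\mathcal{R}_u$ we have $\pi_{\mathcal{R}_u}x=x$, whence $u^\dagger x=\tilde u^{-1}x\in\mathcal{N}(u)^\perp$ and therefore $uu^\dagger x=\tilde u(\tilde u^{-1}x)=x$, which is exactly the asserted identity. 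The only delicate point is the appeal to the open mapping theorem; once the two orthogonal decompositions are in place, the remaining verifications are routine.
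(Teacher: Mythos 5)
Your proof is correct. The paper does not actually prove this lemma---it states it with a citation to Christensen \cite{ch}---and your argument (split $K=\mathcal{N}(u)\oplus\mathcal{N}(u)^\perp$, note that $u$ restricted to $\mathcal{N}(u)^\perp$ is a bounded bijection onto the closed, hence complete, subspace $\mathcal{R}_u$, invert it by the bounded inverse theorem, and extend by $\tilde u^{-1}\pi_{\mathcal{R}_u}$) is precisely the standard construction of the pseudo-inverse given in that reference, with the role of the closed-range hypothesis correctly identified at the open mapping step.
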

\begin{lemma}\label{Ru}
Let $u\in\mathcal{B}(K,H)$. Then the following assertions holds:
\begin{enumerate} \item
 $\mathcal{R}_u$ is closed in $H$ if and only if $\mathcal{R}_{u^{\ast}}$ is closed in $K$.
\item $(u^{\ast})^\dagger=(u^\dagger)^\ast$.
\item
The orthogonal projection of $H$ onto $\mathcal{R}_{u}$ is given by $uu^{\dagger}$.
\item
The orthogonal projection of $K$ onto $\mathcal{R}_{u^{\dagger}}$ is given by $u^{\dagger}u$.\item$\mathcal{N}_{{u}^{\dagger}}=\mathcal{R}^{\bot}_{u}$ and $\mathcal{R}_{u^{\dagger}}=\mathcal{N}^{\bot}_{u}$.
 \end{enumerate}
\end{lemma}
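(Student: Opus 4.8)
The plan is to base everything on an explicit construction of the pseudo-inverse and then read off the five assertions. First I would use the hypothesis that $\mathcal{R}_u$ is closed to split $K=\mathcal{N}_u\oplus\mathcal{N}_u^{\bot}$ and $H=\mathcal{R}_u\oplus\mathcal{R}_u^{\bot}$, and observe that the restriction $\tilde u:=u|_{\mathcal{N}_u^{\bot}}\colon\mathcal{N}_u^{\bot}\to\mathcal{R}_u$ is a bounded linear bijection. Since $\mathcal{R}_u$ is closed it is itself a Hilbert space, so the open mapping theorem supplies a bounded inverse $\tilde u^{-1}$. I would then \emph{define} $u^{\dagger}\in\mathcal{B}(H,K)$ by $u^{\dagger}=\tilde u^{-1}\pi_{\mathcal{R}_u}$, i.e. $u^{\dagger}$ acts as $\tilde u^{-1}$ on $\mathcal{R}_u$ and as $0$ on $\mathcal{R}_u^{\bot}$; this is exactly the operator of Lemma~\ref{l1}, and by construction it satisfies the four Moore--Penrose identities $uu^{\dagger}u=u$, $u^{\dagger}uu^{\dagger}=u^{\dagger}$ together with the self-adjointness of $uu^{\dagger}$ and $u^{\dagger}u$.

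Assertion (5) is then immediate from the construction: $u^{\dagger}$ annihilates $\mathcal{R}_u^{\bot}$ and is injective on $\mathcal{R}_u$, so $\mathcal{N}_{u^{\dagger}}=\mathcal{R}_u^{\bot}$, while its range is $\tilde u^{-1}(\mathcal{R}_u)=\mathcal{N}_u^{\bot}$. For (3) I would compute that for $x=x_1+x_2$ with $x_1\in\mathcal{R}_u$ and $x_2\in\mathcal{R}_u^{\bot}$ one has $uu^{\dagger}x=u\tilde u^{-1}x_1=x_1$, so $uu^{\dagger}$ is the idempotent with range $\mathcal{R}_u$ and kernel $\mathcal{R}_u^{\bot}$, hence equals $\pi_{\mathcal{R}_u}$; its self-adjointness confirms it is the \emph{orthogonal} projection. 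Assertion (4) is handled symmetrically: for $y=y_1+y_2$ with $y_1\in\mathcal{N}_u^{\bot}$ and $y_2\in\mathcal{N}_u$, the right-inverse property gives $u^{\dagger}uy=y_1$, identifying $u^{\dagger}u$ with the orthogonal projection onto $\mathcal{N}_u^{\bot}=\mathcal{R}_{u^{\dagger}}$.

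The step I expect to be the real obstacle is (1), the closed-range theorem, and I would derive it from the projection identity rather than invoke it as a black box. Assuming $\mathcal{R}_u$ closed, part (4) gives the self-adjoint projection $u^{\dagger}u=\pi_{\mathcal{N}_u^{\bot}}$. Then for every $y\in\mathcal{N}_u^{\bot}$ I can write $y=u^{\dagger}uy=(u^{\dagger}u)^{\ast}y=u^{\ast}(u^{\dagger})^{\ast}y\in\mathcal{R}_{u^{\ast}}$, so $\mathcal{N}_u^{\bot}\subseteq\mathcal{R}_{u^{\ast}}$. Combined with the always-valid inclusion $\mathcal{R}_{u^{\ast}}\subseteq\overline{\mathcal{R}_{u^{\ast}}}=\mathcal{N}_u^{\bot}$, this forces $\mathcal{R}_{u^{\ast}}=\mathcal{N}_u^{\bot}$, which is closed. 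The converse direction follows by applying the same argument to $u^{\ast}$ and using $u^{\ast\ast}=u$.

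Finally, for (2) I would first use (1) to guarantee that $\mathcal{R}_{u^{\ast}}$ is closed, so that $(u^{\ast})^{\dagger}$ exists, and then verify that $v:=(u^{\dagger})^{\ast}$ satisfies the four Moore--Penrose conditions relative to $u^{\ast}$. These come out by adjoining the identities recorded in the first paragraph: $uu^{\dagger}u=u$ gives $u^{\ast}vu^{\ast}=u^{\ast}$, $u^{\dagger}uu^{\dagger}=u^{\dagger}$ gives $vu^{\ast}v=v$, and the self-adjointness of $u^{\dagger}u$ and $uu^{\dagger}$ yields the self-adjointness of $u^{\ast}v$ and $vu^{\ast}$. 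By the uniqueness of the operator satisfying these four conditions I conclude $(u^{\dagger})^{\ast}=(u^{\ast})^{\dagger}$. The only delicate point throughout is keeping the logical order clean so that (1) is available before (2), which the construction-first strategy arranges automatically.
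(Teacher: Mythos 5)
Your proof is correct, but note that the paper never proves this lemma at all: it is stated as standard background on the Moore--Penrose pseudo-inverse, implicitly cited to Christensen's book \cite{ch} alongside Lemma \ref{l1}, so there is no internal argument to compare against. Your construction-first route ($u^{\dagger}=\tilde u^{-1}\pi_{\mathcal{R}_u}$ built from the splittings $K=\mathcal{N}_u\oplus\mathcal{N}_u^{\bot}$, $H=\mathcal{R}_u\oplus\mathcal{R}_u^{\bot}$ and the open mapping theorem) is exactly the standard construction, and it has a genuine advantage over taking Lemma \ref{l1} as the starting point: the property $uu^{\dagger}x=x$ on $\mathcal{R}_u$ stated there does \emph{not} uniquely determine $u^{\dagger}$, so assertions (2)--(5) are only meaningful once one fixes the canonical pseudo-inverse, which your explicit definition does. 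Your derivation of (1) from the self-adjointness of $u^{\dagger}u$ is a clean way to get the closed-range theorem without invoking it as a black box, and the logical ordering (construct first, then (5), (3), (4), then (1), then (2)) is sound. Two small points you should make explicit: first, the uniqueness of the operator satisfying the four Moore--Penrose conditions, which you invoke to conclude $(u^{\dagger})^{\ast}=(u^{\ast})^{\dagger}$, deserves its two-line algebraic verification (if $v_1,v_2$ both satisfy the conditions for $u^{\ast}$, expand $v_1=v_1u^{\ast}v_1$ and substitute $u^{\ast}=u^{\ast}v_2u^{\ast}$, using the self-adjointness conditions to show $v_1=v_2$); second, the inclusion $\overline{\mathcal{R}_{u^{\ast}}}=\mathcal{N}_u^{\bot}$ that you use in (1) rests on the identity $\mathcal{N}_u=\mathcal{R}_{u^{\ast}}^{\bot}$, which is standard but worth one line. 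Neither is a gap in substance; with those lines added your proof is complete and self-contained, which is more than the paper provides.
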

\section{Generalized Fusion Frames and Their Operators}
 We define the space $\mathscr{H}_2:=(\sum_{j\in\Bbb J}\oplus H_j)_{\ell_2}$ by
\begin{eqnarray}
\mathscr{H}_2=\big\lbrace \lbrace f_j\rbrace_{j\in\Bbb J} \ : \ f_j\in H_j , \ \sum_{j\in\Bbb J}\Vert f_j\Vert^2<\infty\big\rbrace.
\end{eqnarray}
with the inner product defined by
$$\langle \lbrace f_j\rbrace, \lbrace g_j\rbrace\rangle=\sum_{j\in\Bbb J}\langle f_j, g_j\rangle.$$
It is clear that $\mathscr{H}_2$ is a Hilbert space with pointwise operations.
\begin{definition}
Let $W=\lbrace W_j\rbrace_{j\in\Bbb J}$ be a family of closed subspaces of $H$, $\lbrace v_j\rbrace_{j\in\Bbb J}$ be a family of weights, i.e. $v_j>0$  and $\Lambda_j\in\mathcal{B}(H,H_j)$ for each $j\in\Bbb J$. We say $\Lambda:=(W_j, \Lambda_j, v_j)$ is a \textit{generalized fusion frame} (or \textit{g-fusion frame} ) for $H$ if there exists $0<A\leq B<\infty$ such that for each $f\in H$
\begin{eqnarray}\label{g}
A\Vert f\Vert^2\leq\sum_{j\in\Bbb J}v_j^2\Vert \Lambda_j \pi_{W_j}f\Vert^2\leq B\Vert f\Vert^2.
\end{eqnarray}
\end{definition}
We call $\Lambda$ a \textit{Parseval g-fusion frame}  if $A=B=1$. When the right hand of (\ref{g}) holds, $\Lambda$ is called a \textit{g-fusion Bessel sequence}  for $H$ with bound $B$. If $H_j=H$ for all $j\in\Bbb J$ and $\Lambda_j=I_H$, then we get the fusion frame $(W_j, v_j)$ for $H$. Throughout this paper, $\Lambda$ will be a triple $(W_j, \Lambda_j, v_j)$ with $j\in\Bbb J$ unless otherwise stated.
\begin{proposition}\label{2.2}
Let $\Lambda$ be a g-fusion Bessel sequence for $H$ with bound $B$. Then for each sequence $\lbrace f_j\rbrace_{j\in\Bbb J}\in\mathscr{H}_2$, the series $\sum_{j\in\Bbb J}v_j \pi_{W_j}\Lambda_{j}^{*}f_j$ converges unconditionally.
\end{proposition}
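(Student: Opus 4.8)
The plan is to reduce unconditional convergence to the Cauchy criterion over finite subsets and then control those finite partial sums by the Bessel bound through a duality argument. Recall that a series in a Hilbert space converges unconditionally precisely when it satisfies the following Cauchy condition: for every $\varepsilon>0$ there is a finite set $\sigma_0\subset\Bbb J$ such that $\big\Vert\sum_{j\in\sigma}v_j\pi_{W_j}\Lambda_j^{*}f_j\big\Vert<\varepsilon$ for every finite $\sigma\subset\Bbb J$ disjoint from $\sigma_0$. Thus it suffices to estimate the norm of an arbitrary finite partial sum.

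First I would fix a finite subset $\sigma\subset\Bbb J$ and express the norm of the partial sum as a supremum over unit vectors, writing $\big\Vert\sum_{j\in\sigma}v_j\pi_{W_j}\Lambda_j^{*}f_j\big\Vert=\sup_{\Vert g\Vert=1}\big|\langle\sum_{j\in\sigma}v_j\pi_{W_j}\Lambda_j^{*}f_j,\,g\rangle\big|$. Using that $\pi_{W_j}$ is self-adjoint and shifting $\Lambda_j^{*}$ across the inner product, each term becomes $v_j\langle f_j,\Lambda_j\pi_{W_j}g\rangle$, which isolates exactly the quantity $\Lambda_j\pi_{W_j}g$ that appears in the frame inequality (\ref{g}).

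Next, applying the Cauchy--Schwarz inequality in $\mathscr{H}_2$ to the pair $\lbrace f_j\rbrace_{j\in\sigma}$ and $\lbrace v_j\Lambda_j\pi_{W_j}g\rbrace_{j\in\sigma}$ yields the bound $\big(\sum_{j\in\sigma}\Vert f_j\Vert^2\big)^{1/2}\big(\sum_{j\in\sigma}v_j^2\Vert\Lambda_j\pi_{W_j}g\Vert^2\big)^{1/2}$. The Bessel condition (\ref{g}) bounds the second factor by $\sqrt{B}\,\Vert g\Vert=\sqrt{B}$, uniformly in $g$ and in $\sigma$. Taking the supremum over unit $g$ gives the key estimate $\big\Vert\sum_{j\in\sigma}v_j\pi_{W_j}\Lambda_j^{*}f_j\big\Vert\leq\sqrt{B}\,\big(\sum_{j\in\sigma}\Vert f_j\Vert^2\big)^{1/2}$.

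Finally, since $\lbrace f_j\rbrace\in\mathscr{H}_2$ means $\sum_{j\in\Bbb J}\Vert f_j\Vert^2<\infty$, the tails $\sum_{j\in\sigma}\Vert f_j\Vert^2$ can be made arbitrarily small by taking $\sigma$ outside a sufficiently large finite set; combined with the key estimate and the completeness of $H$, this verifies the Cauchy criterion and hence unconditional convergence. The one genuinely substantive step is the duality argument producing the bound on the partial sum, so I expect the work to lie there, whereas the passage from the Cauchy criterion to unconditional convergence is standard. I would also remark in passing that the same estimate shows the synthesis map $\lbrace f_j\rbrace\mapsto\sum_{j\in\Bbb J}v_j\pi_{W_j}\Lambda_j^{*}f_j$ is a bounded operator on $\mathscr{H}_2$ with norm at most $\sqrt{B}$, a fact that will be convenient later.
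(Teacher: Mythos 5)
Your proof is correct and takes essentially the same route as the paper: the identical duality--Cauchy--Schwarz estimate $\big\Vert\sum_{j\in\sigma}v_j\pi_{W_j}\Lambda_j^{*}f_j\big\Vert\leq\sqrt{B}\big(\sum_{j\in\sigma}\Vert f_j\Vert^2\big)^{1/2}$ on finite partial sums, with the Bessel bound controlling the dual factor. The only difference is that where the paper cites Diestel for the passage from this uniform finite-sum estimate to unconditional convergence, you spell out the Cauchy criterion over finite sets explicitly, which is a correct and self-contained way to finish.
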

\begin{proof}
Let $\Bbb I$ be a finite subset of $\Bbb J$, then
\begin{align*}
\Vert \sum_{j\in\Bbb I}v_j\pi_{W_j}\Lambda_j^{*} f_j\Vert&=\sup_{\Vert g\Vert=1}\big\vert\langle\sum_{j\in\Bbb I}v_j\pi_{W_j}\Lambda_j^{*} f_j , g\rangle\big\vert\\
&\leq\big(\sum_{j\in\Bbb I}\Vert f_j\Vert^2\big)^{\frac{1}{2}}\sup_{\Vert g\Vert=1}\big(\sum_{j\in\Bbb I}v_j^2\Vert \Lambda_j \pi_{W_j}g\Vert^2\big)^{\frac{1}{2}}\\
&\leq \sqrt{B}\big(\sum_{j\in\Bbb I}\Vert f_j\Vert^2\big)^{\frac{1}{2}}<\infty
\end{align*}
and it follows that $\sum_{j\in\Bbb J}v_j \pi_{W_j}\Lambda_{j}^{*}f_j$ is unconditionally convergent in $H$ (see \cite{diestel} page 58).
\end{proof}
Now, we can define the \textit{synthesis operator} by Proposition \ref{2.2}.
\begin{definition}
Let $\Lambda$ be a g-fusion frame for $H$. Then, the synthesis operator for $\Lambda$ is the operator
\begin{eqnarray*}
T_{\Lambda}:\mathscr{H}_2\longrightarrow H
\end{eqnarray*}
defined by
\begin{eqnarray*}
T_{\Lambda}(\lbrace f_j\rbrace_{j\in\Bbb J})=\sum_{j\in\Bbb J}v_j \pi_{W_j}\Lambda_{j}^{*}f_j.
\end{eqnarray*}
\end{definition}
We say the adjoint $T_{\Lambda}^*$ of the synthesis operator the \textit{analysis operator} and it is defined
by the following Proposition.
\begin{proposition}
Let $\Lambda$ be a g-fusion frame for $H$. Then, the analysis operator
\begin{equation*}
T_{\Lambda}^*:H\longrightarrow\mathscr{H}_2
\end{equation*}
is given by
\begin{equation*}
T_{\Lambda}^*(f)=\lbrace v_j \Lambda_j \pi_{W_j}f\rbrace_{j\in\Bbb J}.
\end{equation*}
\end{proposition}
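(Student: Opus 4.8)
The plan is to verify directly that the stated operator is the Hilbert-space adjoint of the synthesis operator $T_{\Lambda}$, using the defining relation $\langle T_{\Lambda}x, g\rangle_H = \langle x, T_{\Lambda}^{*}g\rangle_{\mathscr{H}_2}$. First I would record that $T_{\Lambda}$ is a well-defined bounded operator: the computation in Proposition \ref{2.2} shows $\Vert T_{\Lambda}(\lbrace f_j\rbrace)\Vert \le \sqrt{B}\,(\sum_{j\in\Bbb J}\Vert f_j\Vert^2)^{1/2}$, so $T_{\Lambda}\in\mathcal{B}(\mathscr{H}_2, H)$ and its adjoint $T_{\Lambda}^{*}$ exists and is bounded. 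Before computing, I would check that the candidate image actually lands in $\mathscr{H}_2$: for $f\in H$ the upper inequality in (\ref{g}) gives $\sum_{j\in\Bbb J}\Vert v_j \Lambda_j \pi_{W_j} f\Vert^2 = \sum_{j\in\Bbb J} v_j^2 \Vert \Lambda_j \pi_{W_j} f\Vert^2 \le B\Vert f\Vert^2 <\infty$, so $\lbrace v_j \Lambda_j \pi_{W_j} f\rbrace_{j\in\Bbb J}\in\mathscr{H}_2$ and the formula at least defines a map $H\to\mathscr{H}_2$.

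The heart of the argument is the adjoint computation. I would fix $\lbrace f_j\rbrace\in\mathscr{H}_2$ and $g\in H$, and start from $\langle T_{\Lambda}(\lbrace f_j\rbrace), g\rangle_H = \langle \sum_{j\in\Bbb J} v_j \pi_{W_j}\Lambda_j^{*} f_j, g\rangle_H$. Using the unconditional convergence from Proposition \ref{2.2} together with the continuity of the inner product, I would move the summation outside to obtain $\sum_{j\in\Bbb J} v_j \langle \pi_{W_j}\Lambda_j^{*} f_j, g\rangle_H$. Then applying the self-adjointness of the orthogonal projection $\pi_{W_j}$ and the adjoint relation between $\Lambda_j$ and $\Lambda_j^{*}$ rewrites each term as $\langle f_j, v_j \Lambda_j \pi_{W_j} g\rangle_{H_j}$.

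Re-assembling the sum as an inner product in $\mathscr{H}_2$ then yields $\langle \lbrace f_j\rbrace, \lbrace v_j \Lambda_j \pi_{W_j} g\rbrace\rangle_{\mathscr{H}_2}$. Since this identity holds for every $\lbrace f_j\rbrace\in\mathscr{H}_2$, the uniqueness of the adjoint forces $T_{\Lambda}^{*}(g) = \lbrace v_j \Lambda_j \pi_{W_j} g\rbrace_{j\in\Bbb J}$, which is the claimed formula.

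The only point requiring care, and the step I expect to be the main obstacle, is the interchange of the infinite summation with the inner product. This is not automatic for an infinite index set, but it is justified because Proposition \ref{2.2} guarantees that the series $\sum_{j\in\Bbb J} v_j \pi_{W_j}\Lambda_j^{*} f_j$ converges in the norm of $H$; continuity of $\langle \cdot\, , g\rangle_H$ then permits passing the sum through term by term. Everything else reduces to routine applications of the self-adjointness of $\pi_{W_j}$ and of the identity $\langle \Lambda_j^{*} f_j, h\rangle = \langle f_j, \Lambda_j h\rangle$.
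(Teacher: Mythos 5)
Your proposal is correct and takes essentially the same route as the paper's proof: both verify the adjoint identity $\langle T_{\Lambda}\lbrace f_j\rbrace, g\rangle=\langle \lbrace f_j\rbrace, \lbrace v_j\Lambda_j\pi_{W_j}g\rbrace\rangle$ by passing the norm-convergent series through the inner product and using the self-adjointness of $\pi_{W_j}$ together with the relation between $\Lambda_j$ and $\Lambda_j^{*}$. Your additional checks --- that the candidate sequence indeed lies in $\mathscr{H}_2$ via the Bessel bound, and that the sum--inner-product interchange is licensed by the unconditional convergence of Proposition \ref{2.2} --- simply make explicit details the paper leaves tacit.
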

\begin{proof}
If $f\in H$ and $\lbrace g_j\rbrace_{j\in\Bbb J}\in\mathscr{H}_2$, we have
\begin{align*}
\langle T_{\Lambda}^*(f), \lbrace g_j\rbrace_{j\in\Bbb J}\rangle&=\langle f, T_{\Lambda}\lbrace g_j\rbrace_{j\in\Bbb J}\rangle\\
&=\langle f, \sum_{j\in\Bbb J}v_j \pi_{W_j}\Lambda_{j}^{*}g_j\rangle\\
&=\sum_{j\in\Bbb J}v_j \langle \Lambda_j \pi_{W_j}f, g_j\rangle\\
&=\langle\lbrace v_j \Lambda_j \pi_{W_j}f\rbrace_{j\in\Bbb J}, \lbrace g_j\rbrace_{j\in\Bbb J}\rangle.
\end{align*}
\end{proof}
\begin{theorem}\label{t2}
The following assertions are equivalent:
\begin{enumerate}
\item $\Lambda$ is a g-fusion Bessel sequence for $H$ with bound $B$.
\item The operator
\begin{align*}
T_{\Lambda}&:\mathscr{H}_2\longrightarrow H\\
T_{\Lambda}(\lbrace f_j\rbrace_{j\in\Bbb J})&=\sum_{j\in\Bbb J}v_j \pi_{W_j}\Lambda_{j}^{*}f_j
\end{align*}
is a well-defined and bounded operator with $\Vert T_{\lambda}\Vert\leq \sqrt{B}$.
\item The series
\begin{align*}
\sum_{j\in\Bbb J}v_j \pi_{W_j}\Lambda_{j}^{*}f_j
\end{align*}
 converges for all $\lbrace f_j\rbrace_{j\in\Bbb J}\in\mathscr{H}_2$.
\end{enumerate}
\end{theorem}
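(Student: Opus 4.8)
The plan is to prove the cyclic chain of implications $(1)\Rightarrow(2)\Rightarrow(3)\Rightarrow(1)$, since each individual step is then a single clean argument and Proposition \ref{2.2} already supplies the bulk of the first one.

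For $(1)\Rightarrow(2)$ I would lean directly on Proposition \ref{2.2}: it already shows that for every $\lbrace f_j\rbrace_{j\in\Bbb J}\in\mathscr{H}_2$ the series $\sum_{j\in\Bbb J}v_j\pi_{W_j}\Lambda_j^{*}f_j$ converges unconditionally, so $T_{\Lambda}$ is well-defined. The norm bound comes from the same estimate used there: for a finite $\Bbb I\subset\Bbb J$ one has $\Vert\sum_{j\in\Bbb I}v_j\pi_{W_j}\Lambda_j^{*}f_j\Vert\leq\sqrt{B}\,(\sum_{j\in\Bbb I}\Vert f_j\Vert^2)^{1/2}\leq\sqrt{B}\,\Vert\lbrace f_j\rbrace\Vert_{\mathscr{H}_2}$, and letting $\Bbb I$ exhaust $\Bbb J$ and passing to the limit gives $\Vert T_{\Lambda}\lbrace f_j\rbrace\Vert\leq\sqrt{B}\,\Vert\lbrace f_j\rbrace\Vert_{\mathscr{H}_2}$, i.e.\ $\Vert T_{\Lambda}\Vert\leq\sqrt{B}$. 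The implication $(2)\Rightarrow(3)$ is immediate, because well-definedness of $T_{\Lambda}$ on all of $\mathscr{H}_2$ is precisely the assertion that the defining series converges for every $\lbrace f_j\rbrace\in\mathscr{H}_2$.

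The substantive direction is $(3)\Rightarrow(1)$, and I expect it to be the main obstacle. The idea is to recover boundedness from mere pointwise convergence via the uniform boundedness principle. For each finite subset $\Bbb I\subset\Bbb J$ define the bounded linear map $T_{\Bbb I}:\mathscr{H}_2\to H$ by $T_{\Bbb I}(\lbrace f_j\rbrace)=\sum_{j\in\Bbb I}v_j\pi_{W_j}\Lambda_j^{*}f_j$. By hypothesis the partial sums converge for each fixed $\lbrace f_j\rbrace$, so the family $\lbrace T_{\Bbb I}(\lbrace f_j\rbrace)\rbrace_{\Bbb I}$ is bounded in $H$ for every $\lbrace f_j\rbrace\in\mathscr{H}_2$. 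Since $\mathscr{H}_2$ is a Hilbert space, Banach--Steinhaus yields a uniform constant $C$ with $\Vert T_{\Bbb I}\Vert\leq C$ for all finite $\Bbb I$, whence the limit operator $T_{\Lambda}$ is bounded with $\Vert T_{\Lambda}\Vert\leq C$.

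Once $T_{\Lambda}$ is known to be bounded, I would pass to its adjoint, which (by the computation proving the analysis operator formula) is $T_{\Lambda}^{*}(f)=\lbrace v_j\Lambda_j\pi_{W_j}f\rbrace_{j\in\Bbb J}$, with $\Vert T_{\Lambda}^{*}\Vert=\Vert T_{\Lambda}\Vert$. Then for every $f\in H$,
$$\sum_{j\in\Bbb J}v_j^2\Vert\Lambda_j\pi_{W_j}f\Vert^2=\Vert T_{\Lambda}^{*}f\Vert^2\leq\Vert T_{\Lambda}\Vert^2\,\Vert f\Vert^2,$$
which is exactly the g-fusion Bessel condition with bound $B=\Vert T_{\Lambda}\Vert^2$. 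The only point requiring care is that the adjoint formula be justified before invoking it here; since that computation used only boundedness of $T_{\Lambda}$ (already secured by Banach--Steinhaus) and not the Bessel hypothesis, it is legitimately available at this stage.
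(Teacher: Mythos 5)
Your architecture matches the paper's ((1)$\Rightarrow$(2) from Proposition \ref{2.2}, a uniform-boundedness argument for (3)$\Rightarrow$(1)), but there is one genuine misstep in your (3)$\Rightarrow$(1): you apply Banach--Steinhaus to the family $\lbrace T_{\Bbb I}\rbrace$ indexed by \emph{all} finite subsets $\Bbb I\subset\Bbb J$, claiming that convergence of the series makes $\lbrace T_{\Bbb I}(\lbrace f_j\rbrace)\rbrace_{\Bbb I}$ bounded for each fixed $\lbrace f_j\rbrace$. Mere convergence in a fixed enumeration of $\Bbb J$ does not bound the sums over arbitrary finite subsets: already for a conditionally convergent scalar series $\sum_n a_n$, the finite-subset sums $\sum_{n\in\Bbb I}a_n$ are unbounded. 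Unconditional convergence is only available \emph{after} the Bessel bound is established (that is what Proposition \ref{2.2} proves), so invoking it here would be circular. The repair is exactly what the paper does: fix an enumeration, take the sequence of partial-sum operators $T_n(\lbrace f_j\rbrace)=\sum_{j=1}^{n}v_j\pi_{W_j}\Lambda_j^{*}f_j$, each bounded, converging pointwise to $T$ by hypothesis (3); pointwise convergence gives pointwise boundedness of the \emph{sequence}, and Banach--Steinhaus yields $\sup_n\Vert T_n\Vert<\infty$ with $\Vert T\Vert\leq\liminf_n\Vert T_n\Vert$.

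Your finish of (3)$\Rightarrow$(1) via the adjoint is sound, but your justification needs one adjustment: the paper's computation of $T_{\Lambda}^{*}f=\lbrace v_j\Lambda_j\pi_{W_j}f\rbrace$ presupposes that this sequence lies in $\mathscr{H}_2$ (it is carried out under the Bessel hypothesis), so you cannot quote it verbatim, contrary to your closing remark. Instead, note that once $T$ is bounded, $T^{*}f$ is some element of $\mathscr{H}_2$, and testing $\langle T^{*}f,\lbrace g_j\rbrace\rangle=\langle f,T\lbrace g_j\rbrace\rangle$ against sequences supported at a single index identifies the $j$-th component of $T^{*}f$ as $v_j\Lambda_j\pi_{W_j}f$; square-summability is then automatic and $\sum_{j\in\Bbb J}v_j^2\Vert\Lambda_j\pi_{W_j}f\Vert^2=\Vert T^{*}f\Vert^2\leq\Vert T\Vert^2\Vert f\Vert^2$ gives the Bessel bound. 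With these two repairs your route is correct, and in fact tidier than the paper's: the paper closes (3)$\Rightarrow$(1) by citing Theorem \ref{t2} inside its own proof (implicitly the already-proved implication (2)$\Rightarrow$(1), which it established by a finite-section Cauchy--Schwarz estimate), whereas your adjoint identity makes that last step explicit and self-contained.
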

\begin{proof}
$\textit{(1)}\Rightarrow\textit{(2)} $. It is clear by Proposition \ref{2.2}.\\
$\textit{(2)}\Rightarrow\textit{(1)}$. Suppose that $T_{\Lambda}$ is a well-defined and bounded operator with $\Vert T_{\lambda}\Vert\leq \sqrt{B}$. Let $\Bbb I$ be a finite subset of $\Bbb J$ and $f\in H$.
Therefore
\begin{align*}
\sum_{j\in\Bbb I}v_j^2\Vert \Lambda_j \pi_{W_j}f\Vert^2&=\sum_{j\in\Bbb I}v_j^2\langle \pi_{W_j}\Lambda^*_j \Lambda_j \pi_{W_j}f, f \rangle\\
&=\langle T_{\Lambda}\lbrace v_j\Lambda_j \pi_{W_j}f\rbrace_{j\in\Bbb I}, f\rangle\\
&\leq \Vert T_{\Lambda}\Vert \Vert v_j\Lambda_j \pi_{W_j}f\Vert \Vert f\Vert\\
&=\Vert T_{\Lambda}\Vert\big(\sum_{j\in\Bbb I}v_j^2\Vert \Lambda_j \pi_{W_j}f\Vert^2\big)^{\frac{1}{2}}\Vert f\Vert.
\end{align*}
Thus, we conclude that
\begin{align*}
\sum_{j\in\Bbb J}v_j^2\Vert \Lambda_j \pi_{W_j}f\Vert^2\leq\Vert T_{\Lambda}\Vert^2 \Vert f\Vert^2\leq B\Vert f\Vert^2.
\end{align*}
$\textit{(1)}\Rightarrow\textit{(3)}$. It is clear.\\
$\textit{(3)}\Rightarrow\textit{(1)}$. Suppose that $\sum_{j\in\Bbb J}v_j \pi_{W_j}\Lambda_{j}^{*}f_j$  converges for all $\lbrace f_j\rbrace_{j\in\Bbb J}\in\mathscr{H}_2$. We define
\begin{align*}
T:&\mathscr{H}_2\longrightarrow H\\
T(\lbrace f_j\rbrace_{j\in\Bbb J})&=\sum_{j\in\Bbb J}v_j \pi_{W_j}\Lambda_{j}^{*}f_j.
\end{align*}
Then, $T$ is well-defined. Let for each $n\in\Bbb N$,
\begin{align*}
T_n:&\mathscr{H}_2\longrightarrow H\\
T_n(\lbrace f_j\rbrace_{j\in\Bbb J})&=\sum_{j=1}^{n}v_j \pi_{W_j}\Lambda_{j}^{*}f_j.
\end{align*}
Let $B_n:=(\sum_{j=1}^{n}\Vert v_j \pi_{W_j}\Lambda_{j}^{*}f_j\Vert^2)^{\frac{1}{2}}$. Since, $\Vert T_n(\lbrace f_j\rbrace_{j\in\Bbb J})\Vert\leq B_n \Vert f_j\lbrace\rbrace_{j\in\Bbb J}\Vert$, then $\lbrace T_n\rbrace$ is a sequence of bounded linear operators which converges pointwise to $T$. Hence, by the Banach-Steinhaus Theorem, $T$ is a bounded operator with
$$\Vert T\Vert\leq\lim\inf\Vert T_n\Vert.$$
So, by Theorem \ref{t2}, $\Lambda$ is a g-fusion Bessel sequence for $H$.
\end{proof}
\begin{corollary}\label{cor}
$\Lambda$ is a g-fusion Bessel sequence for $H$ with bound $B$ if and only if for each finite subset $\Bbb I\subseteq\Bbb J$ and $f_j\in H_j$
$$\Vert\sum_{j\in\Bbb I}v_j \pi_{W_j}\Lambda^*_j f_j\Vert^2\leq B\sum_{j\in\Bbb I}\Vert f_j\Vert^2.$$
\end{corollary}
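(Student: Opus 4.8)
The plan is to derive this directly from Theorem~\ref{t2}, which already identifies the g-fusion Bessel property with boundedness of the synthesis operator $T_{\Lambda}$ by $\sqrt{B}$. The stated finite-subset inequality is then essentially the assertion that $T_{\Lambda}$ has norm at most $\sqrt{B}$, tested against finitely supported sequences and then upgraded to all of $\mathscr{H}_2$ by a convergence argument.

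For the forward implication, I would assume $\Lambda$ is a g-fusion Bessel sequence with bound $B$, so that by Theorem~\ref{t2} the synthesis operator $T_{\Lambda}$ is well-defined and bounded with $\Vert T_{\Lambda}\Vert\leq\sqrt{B}$. Given a finite set $\Bbb I\subseteq\Bbb J$ and vectors $f_j\in H_j$ for $j\in\Bbb I$, I extend them to a sequence in $\mathscr{H}_2$ by setting $f_j=0$ for $j\notin\Bbb I$. Applying $T_{\Lambda}$ to this finitely supported sequence yields exactly $\sum_{j\in\Bbb I}v_j\pi_{W_j}\Lambda_j^{*}f_j$, and the operator bound $\Vert T_{\Lambda}(\lbrace f_j\rbrace)\Vert\leq\sqrt{B}\,\Vert\lbrace f_j\rbrace\Vert$ becomes the claimed inequality after squaring.

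For the converse, I assume the inequality holds for every finite $\Bbb I$ and aim to invoke condition \textit{(3)} of Theorem~\ref{t2} by showing that $\sum_{j\in\Bbb J}v_j\pi_{W_j}\Lambda_j^{*}f_j$ converges for each $\lbrace f_j\rbrace\in\mathscr{H}_2$. I would verify the Cauchy criterion for partial sums over increasing finite sets: for $\Bbb I_1\subseteq\Bbb I_2$, the hypothesis bounds the norm of the partial sum over $\Bbb I_2\setminus\Bbb I_1$ by $B\sum_{j\in\Bbb I_2\setminus\Bbb I_1}\Vert f_j\Vert^2$, which tends to $0$ since $\sum_{j}\Vert f_j\Vert^2<\infty$. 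Hence the series converges, and Theorem~\ref{t2} returns the g-fusion Bessel property. To pin the bound down to $B$, I would then let $\Bbb I$ exhaust $\Bbb J$ in the finite inequality, obtaining $\Vert T_{\Lambda}(\lbrace f_j\rbrace)\Vert^2\leq B\Vert\lbrace f_j\rbrace\Vert^2$ for all $\lbrace f_j\rbrace$, so that $\Vert T_{\Lambda}\Vert\leq\sqrt{B}$ and the equivalence of Theorem~\ref{t2} closes the argument.

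The only delicate point is this limit step in the converse: one must ensure that the uniform constant $B$ survives the passage from finite subsets to the full index set. This is routine once convergence is in hand, because the norm is continuous, but it is precisely the place where the argument genuinely uses $\lbrace f_j\rbrace\in\mathscr{H}_2$ rather than mere finite support.
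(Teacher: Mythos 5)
Your proof is correct and follows essentially the paper's own route: the paper dismisses the corollary in one line as an immediate consequence of Theorem~\ref{t2} and the proof of Proposition~\ref{2.2}, which amounts exactly to your forward direction (testing $T_{\Lambda}$ on finitely supported sequences) and your converse (verifying convergence to invoke condition \textit{(3)} of Theorem~\ref{t2}). Your care in passing the constant $B$ to the limit is a detail the paper glosses over, and it is genuinely needed, since \textit{(3)}$\Rightarrow$\textit{(1)} of Theorem~\ref{t2} by itself only yields the bound $\Vert T_{\Lambda}\Vert^2$ rather than $B$.
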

\begin{proof}
It is an immediate consequence of Theorem \ref{t2} and the proof of Proposition \ref{2.2}.
\end{proof}
Let $\Lambda$ be a g-fusion frame for $H$. The \textit{g-fusion frame operator} is defined by
\begin{align*}
S_{\Lambda}&:H\longrightarrow H\\
S_{\Lambda}f&=T_{\Lambda}T^*_{\Lambda}f.
\end{align*}
Now, for each $f\in H$ we have
$$S_{\Lambda}f=\sum_{j\in\Bbb J}v_j^2 \pi_{W_j}\Lambda^*_j \Lambda_j \pi_{W_j}f$$
and
$$\langle S_{\Lambda}f, f\rangle=\sum_{j\in\Bbb J}v_j^2\Vert \Lambda_j \pi_{W_j}f\Vert^2.$$
Therefore,
$$A I\leq S_{\Lambda}\leq B I.$$
This means that $S_{\Lambda}$ is a bounded, positive and invertible operator (with adjoint inverse). So, we have the reconstruction formula for any $f\in H$:
\begin{equation}\label{3}
f=\sum_{j\in\Bbb J}v_j^2 \pi_{W_j}\Lambda^*_j \Lambda_j \pi_{W_j}S^{-1}_{\Lambda}f
=\sum_{j\in\Bbb J}v_j^2 S^{-1}_{\Lambda}\pi_{W_j}\Lambda^*_j \Lambda_j \pi_{W_j}f.
\end{equation}
\begin{example}
 We introduce a Parseval g-fusion frame for $H$ by the g-fusion frame operator. Assume that $\Lambda=(W_j, \Lambda_j, v_j)$ is a g-fusion frame for $H$. Since $S_{\Lambda}$(or $S_{\Lambda}^{-1}$) is positive in $\mathcal{B}(H)$ and $\mathcal{B}(H)$ is a $C^*$-algebra, then there exist a unique positive square root $S^{\frac{1}{2}}_{\Lambda}$ (or $S^{-\frac{1}{2}}_{\Lambda}$) and they commute with $S_{\Lambda}$ and $S_{\Lambda}^{-1}$. Therefore, each $f\in H$ can be written
\begin{align*}
f&=S^{-\frac{1}{2}}_{\Lambda}S_{\Lambda}S^{-\frac{1}{2}}_{\Lambda}\\
&=\sum_{j\in\Bbb J}v_j^2 S^{-\frac{1}{2}}_{\Lambda}\pi_{W_j}\Lambda^*_j \Lambda_j \pi_{W_j}S^{-\frac{1}{2}}_{\Lambda}f.
\end{align*}
This implies that
\begin{align*}
\Vert f\Vert^2&=\langle f, f\rangle\\
&=\langle\sum_{j\in\Bbb J}v_j^2 S^{-\frac{1}{2}}_{\Lambda}\pi_{W_j}\Lambda^*_j \Lambda_j \pi_{W_j}S^{-\frac{1}{2}}_{\Lambda}f, f\rangle\\
&=\sum_{j\in\Bbb J}v_j^2\Vert \Lambda_j \pi_{W_j}S^{-\frac{1}{2}}_{\Lambda}f\Vert^2\\
&=\sum_{j\in\Bbb J}v_j^2\Vert \Lambda_j \pi_{W_j}S^{-\frac{1}{2}}_{\Lambda}\pi_{S^{-\frac{1}{2}}_{\Lambda}W_j}f\Vert^2,
\end{align*}
this means that $(S^{-\frac{1}{2}}_{\Lambda}W_j, \Lambda_j \pi_{W_j}S^{-\frac{1}{2}}_{\Lambda}, v_j)$ is a Parseval g-fusion frame.
\end{example}
\begin{theorem}\label{2.3}
 $\Lambda$ is a g-fusion frame for $H$ if and only if
 \begin{align*}
T_{\Lambda}&:\mathscr{H}_2\longrightarrow H\\
T_{\Lambda}(\lbrace f_j\rbrace_{j\in\Bbb J})&=\sum_{j\in\Bbb J}v_j \pi_{W_j}\Lambda_{j}^{*}f_j
\end{align*}
is a well-defined, bounded and surjective.
\end{theorem}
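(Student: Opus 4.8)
The plan is to prove the two implications separately, leaning on Theorem \ref{t2} for the well-definedness and boundedness of $T_{\Lambda}$ in both directions, and on the pseudo-inverse machinery of Lemmas \ref{l1} and \ref{Ru} to handle the equivalence between surjectivity and the lower frame bound.

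For the forward implication, I would first note that a g-fusion frame is in particular a g-fusion Bessel sequence with bound $B$, so Theorem \ref{t2} immediately yields that $T_{\Lambda}$ is well-defined and bounded with $\Vert T_{\Lambda}\Vert\leq\sqrt{B}$. To obtain surjectivity I would invoke the g-fusion frame operator $S_{\Lambda}=T_{\Lambda}T_{\Lambda}^{*}$: the lower inequality in (\ref{g}) gives $AI\leq S_{\Lambda}$, so $S_{\Lambda}$ is positive and invertible, hence maps $H$ onto $H$. Since every $f\in H$ can be written as
$$f=S_{\Lambda}(S_{\Lambda}^{-1}f)=T_{\Lambda}(T_{\Lambda}^{*}S_{\Lambda}^{-1}f)\in\mathcal{R}_{T_{\Lambda}},$$
we conclude $\mathcal{R}_{T_{\Lambda}}=H$, i.e. $T_{\Lambda}$ is surjective.

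For the converse, Theorem \ref{t2} again supplies the upper bound $B$ once $T_{\Lambda}$ is known to be well-defined and bounded, so only the lower bound remains. Here surjectivity gives that $\mathcal{R}_{T_{\Lambda}}=H$ is closed, so Lemma \ref{l1} furnishes a bounded pseudo-inverse $T_{\Lambda}^{\dagger}$ with $T_{\Lambda}T_{\Lambda}^{\dagger}f=f$ for all $f\in H$. Passing to adjoints and using Lemma \ref{Ru}(2), I get $(T_{\Lambda}^{\dagger})^{*}T_{\Lambda}^{*}f=f$, whence for each $f\in H$
$$\Vert f\Vert^{2}=\Vert(T_{\Lambda}^{\dagger})^{*}T_{\Lambda}^{*}f\Vert^{2}\leq\Vert T_{\Lambda}^{\dagger}\Vert^{2}\Vert T_{\Lambda}^{*}f\Vert^{2}=\Vert T_{\Lambda}^{\dagger}\Vert^{2}\sum_{j\in\Bbb J}v_{j}^{2}\Vert\Lambda_{j}\pi_{W_{j}}f\Vert^{2},$$
where I have substituted the analysis-operator formula $T_{\Lambda}^{*}f=\lbrace v_{j}\Lambda_{j}\pi_{W_{j}}f\rbrace_{j\in\Bbb J}$. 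Rearranging yields the lower bound $A=\Vert T_{\Lambda}^{\dagger}\Vert^{-2}$, which together with the Bessel bound shows $\Lambda$ is a g-fusion frame.

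The step I expect to be the main obstacle is the lower-bound argument in the converse: one must correctly convert surjectivity of $T_{\Lambda}$ into a left inverse of the analysis operator $T_{\Lambda}^{*}$, and the clean route is through the pseudo-inverse together with the adjoint identity of Lemma \ref{Ru}(2). Some care is needed to ensure $T_{\Lambda}^{\dagger}$ is genuinely bounded (guaranteed precisely because $\mathcal{R}_{T_{\Lambda}}$ is closed), so that $\Vert T_{\Lambda}^{\dagger}\Vert$ is a finite positive constant delivering a legitimate frame bound.
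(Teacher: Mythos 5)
Your proposal is correct and takes essentially the same route as the paper: the forward direction via invertibility of $S_{\Lambda}=T_{\Lambda}T_{\Lambda}^{*}$ (you merely make explicit the factorization $f=T_{\Lambda}(T_{\Lambda}^{*}S_{\Lambda}^{-1}f)$ that the paper leaves implicit), and the converse via Theorem \ref{t2} together with the pseudo-inverse from Lemma \ref{l1}, adjointed to give $(T_{\Lambda}^{\dagger})^{*}T_{\Lambda}^{*}=I_{H}$ and the lower bound $\Vert T_{\Lambda}^{\dagger}\Vert^{-2}$.
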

\begin{proof}
If $\Lambda$ is a g-fusion frame for $H$, the operator $S_{\Lambda}$ is invertible. Thus, $T_{\Lambda}$ is surjective. Conversely, let $T_{\Lambda}$ be a well-defined, bounded and surjective. Then, by Theorem \ref{t2}, $\Lambda$ is a g-fusion Bessel sequence for $H$. So, $T^{*}_{\Lambda}f=\lbrace v_j \Lambda_j \pi_{W_j}f\rbrace_{j\in\Bbb J}$ for all $f\in H$. Since $T_{\Lambda}$ is surjective, by Lemma \ref{l1}, there exists an operator $T^{\dagger}_{\Lambda}:H\rightarrow\mathscr{H}_2$ such that $(T^{\dagger}_{\Lambda})^*T^*_{\Lambda}=I_{H}$. Now, for each $f\in H$ we have
\begin{align*}
\Vert f\Vert^2&\leq\Vert (T^{\dagger}_{\Lambda})^*\Vert^2 \Vert T^*_{\Lambda}f\Vert^2\\
&=\Vert T^{\dagger}_{\Lambda}\Vert^2\sum_{j\in\Bbb J}v_j^2\Vert \Lambda_j \pi_{W_j}f\Vert^2.
\end{align*}
Therefore, $\Lambda$ is a g-fusion frame for $H$ with lower g-fusion frame bound $\Vert T^{\dagger}\Vert^{-2}$ and upper g-fusion frame $\Vert T_{\Lambda}\Vert^2$.
\end{proof}
\begin{theorem}
$\Lambda$ is a g-fusion frame for $H$ if and only if the operator
$$S_{\Lambda}:f\longrightarrow\sum_{j\in\Bbb J}v_j^2 \pi_{W_j}\Lambda^*_j \Lambda_j \pi_{W_j}f$$
is a well-defined, bounded and surjective.
\end{theorem}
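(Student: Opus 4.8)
The plan is to use the identity $S_{\Lambda}=T_{\Lambda}T_{\Lambda}^{*}$ together with the fact that $S_{\Lambda}$ is positive and self-adjoint, so that the lower and upper g-fusion frame inequalities correspond precisely to $S_{\Lambda}$ being bounded below and bounded above. Throughout I will use the already-recorded identity $\langle S_{\Lambda}f,f\rangle=\sum_{j\in\Bbb J}v_{j}^{2}\Vert\Lambda_{j}\pi_{W_{j}}f\Vert^{2}$, which follows from $\pi_{W_{j}}^{*}=\pi_{W_{j}}$.

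For the forward implication, suppose $\Lambda$ is a g-fusion frame. Then it is in particular a g-fusion Bessel sequence, so by Theorem \ref{t2} the synthesis operator $T_{\Lambda}$ is well-defined and bounded; hence $S_{\Lambda}=T_{\Lambda}T_{\Lambda}^{*}$ is a well-defined bounded operator. The frame inequalities then read $AI\leq S_{\Lambda}\leq BI$, so $S_{\Lambda}$ is positive and invertible, and an invertible operator is surjective.

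For the converse, assume $S_{\Lambda}$ is well-defined, bounded and surjective. Each summand $\pi_{W_{j}}\Lambda_{j}^{*}\Lambda_{j}\pi_{W_{j}}$ is self-adjoint and positive, so $S_{\Lambda}$ is positive and self-adjoint. Boundedness immediately yields the upper estimate $\sum_{j\in\Bbb J}v_{j}^{2}\Vert\Lambda_{j}\pi_{W_{j}}f\Vert^{2}=\langle S_{\Lambda}f,f\rangle\leq\Vert S_{\Lambda}\Vert\,\Vert f\Vert^{2}$, so $\Lambda$ is a g-fusion Bessel sequence with bound $B=\Vert S_{\Lambda}\Vert$.

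The remaining and genuinely substantive step is the lower bound, which I expect to be the main obstacle. Since $S_{\Lambda}$ is self-adjoint we have $\mathcal{R}_{S_{\Lambda}}^{\bot}=\mathcal{N}_{S_{\Lambda}}$, and surjectivity forces $\mathcal{N}_{S_{\Lambda}}=\lbrace 0\rbrace$; thus $S_{\Lambda}$ is a bounded bijection of $H$ onto $H$ and hence invertible with bounded inverse by the bounded inverse theorem. Because $S_{\Lambda}$ is positive and invertible, $S_{\Lambda}\geq\Vert S_{\Lambda}^{-1}\Vert^{-1}I$, the bottom of its spectrum being $\Vert S_{\Lambda}^{-1}\Vert^{-1}$. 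Therefore $\sum_{j\in\Bbb J}v_{j}^{2}\Vert\Lambda_{j}\pi_{W_{j}}f\Vert^{2}=\langle S_{\Lambda}f,f\rangle\geq\Vert S_{\Lambda}^{-1}\Vert^{-1}\Vert f\Vert^{2}$, which is the lower inequality with $A=\Vert S_{\Lambda}^{-1}\Vert^{-1}$, and combining the two estimates shows $\Lambda$ is a g-fusion frame. The one point needing care is precisely this passage from a qualitative surjectivity hypothesis to a quantitative lower frame bound: the mechanism is that self-adjointness converts surjectivity into injectivity, after which the bounded inverse theorem and the spectral description of positive invertible operators supply the constant.
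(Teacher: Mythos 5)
Your proof is correct and follows essentially the same route as the paper: both arguments derive injectivity of $S_{\Lambda}$ from surjectivity plus self-adjointness via $\mathcal{N}_{S_{\Lambda}}=\mathcal{R}_{S_{\Lambda}}^{\bot}$, conclude invertibility, and then obtain the lower frame bound from spectral properties of the positive invertible operator $S_{\Lambda}$. The only difference is cosmetic, namely which spectral fact is cited at the end: the paper invokes Proposition 70.8 of Heuser to place $C=\inf_{\Vert f\Vert=1}\langle S_{\Lambda}f,f\rangle$ in $\sigma(S_{\Lambda})$, whereas you use the equivalent estimate $S_{\Lambda}\geq\Vert S_{\Lambda}^{-1}\Vert^{-1}I$, and both give the same constant $A=\Vert S_{\Lambda}^{-1}\Vert^{-1}$.
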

\begin{proof}
The necessity of the statement is clear. Let $S_{\Lambda}$ be a well-defined, bounded and surjective operator. Since $\langle S_{\Lambda}f, f\rangle\geq0$ for all $f\in H$, so $S_{\Lambda}$ is positive. Then
$$\ker S_{\Lambda}=(\mathcal{R}_{S^*_{\Lambda}})^{\dagger}=(\mathcal{R}_{S_{\Lambda}})^{\dagger}=\lbrace 0\rbrace$$
thus, $S_{\Lambda}$ is injective. Therefore, $S_{\Lambda}$ is invertible. Thus, $0\notin\sigma(S_{\Lambda})$. Let $C:=\inf_{\Vert f\Vert=1}\langle S_{\Lambda}f, f\rangle$. By Proposition 70.8 in \cite{he}, we have $C\in\sigma(S_{\Lambda})$. So $C>0$. Now, we can write for each $f\in H$
$$\sum_{j\in\Bbb J}v_j^2\Vert \Lambda_j \pi_{W_j}f\Vert^2=\langle S_{\Lambda}f, f\rangle\geq C\Vert f\Vert^2$$
and
$$\sum_{j\in\Bbb J}v_j^2\Vert \Lambda_j \pi_{W_j}f\Vert^2=\langle S_{\Lambda}f, f\rangle\leq \Vert S_{\Lambda}\Vert \Vert f\Vert^2.$$
It follows that $\Lambda$ is a g-fusion frame for $H$.
\end{proof}
\begin{theorem}
Let $\Lambda:=(W_j, \Lambda_j, v_j)$ and $\Theta:=(W_j, \Theta_j, v_j)$ be two g-fusion Bessel sequence for $H$ with bounds $B_1$ and $B_2$, respectively. Suppose that $T_{\Lambda}$ and $T_{\Theta}$ be their analysis operators such that $T_{\Theta}T^*_{\Lambda}=I_H$. Then, both $\Lambda$ and $\Theta$ are g-fusion frames.
\end{theorem}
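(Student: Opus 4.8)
The plan is to use the hypothesis $T_\Theta T_\Lambda^*=I_H$ to manufacture the two missing lower bounds, since the Bessel assumptions already furnish the upper bounds $B_1$ and $B_2$. First I would recall the explicit form of the analysis operators established earlier: for every $f\in H$,
$$T_\Lambda^* f=\lbrace v_j\Lambda_j\pi_{W_j}f\rbrace_{j\in\Bbb J},\qquad T_\Theta^* f=\lbrace v_j\Theta_j\pi_{W_j}f\rbrace_{j\in\Bbb J},$$
both lying in $\mathscr{H}_2$, together with the identities $\Vert T_\Lambda^* f\Vert^2=\sum_{j\in\Bbb J}v_j^2\Vert\Lambda_j\pi_{W_j}f\Vert^2$ and $\Vert T_\Theta^* f\Vert^2=\sum_{j\in\Bbb J}v_j^2\Vert\Theta_j\pi_{W_j}f\Vert^2$. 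The Bessel hypotheses then read $\Vert T_\Lambda^* f\Vert^2\le B_1\Vert f\Vert^2$ and $\Vert T_\Theta^* f\Vert^2\le B_2\Vert f\Vert^2$.

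The central step is the following chain. For $f\in H$, using $I_H=T_\Theta T_\Lambda^*$ and passing the adjoint into $\mathscr{H}_2$,
$$\Vert f\Vert^2=\langle f,f\rangle=\langle T_\Theta T_\Lambda^* f,f\rangle=\langle T_\Lambda^* f,T_\Theta^* f\rangle.$$
Applying the Cauchy--Schwarz inequality in $\mathscr{H}_2$ and then the Bessel bound for $\Theta$ gives
$$\Vert f\Vert^2\le\Vert T_\Lambda^* f\Vert\,\Vert T_\Theta^* f\Vert\le\sqrt{B_2}\,\Vert T_\Lambda^* f\Vert\,\Vert f\Vert,$$
so that $\Vert f\Vert\le\sqrt{B_2}\,\Vert T_\Lambda^* f\Vert$, i.e. $B_2^{-1}\Vert f\Vert^2\le\sum_{j\in\Bbb J}v_j^2\Vert\Lambda_j\pi_{W_j}f\Vert^2$. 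Combined with the upper bound $B_1$ this exhibits $\Lambda$ as a g-fusion frame with bounds $B_2^{-1}$ and $B_1$.

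By the same computation with the roles of $\Lambda$ and $\Theta$ interchanged---bounding $\Vert T_\Lambda^* f\Vert\le\sqrt{B_1}\Vert f\Vert$ instead---I would obtain $B_1^{-1}\Vert f\Vert^2\le\sum_{j\in\Bbb J}v_j^2\Vert\Theta_j\pi_{W_j}f\Vert^2\le B_2\Vert f\Vert^2$, so $\Theta$ is a g-fusion frame with bounds $B_1^{-1}$ and $B_2$. One should note that the equality $\langle T_\Theta T_\Lambda^* f,f\rangle=\langle T_\Lambda^* f,T_\Theta^* f\rangle$ rests only on the adjoint relation in $\mathscr{H}_2$, which is legitimate precisely because $\Theta$ is Bessel and hence $T_\Theta$ is a bounded operator by Theorem \ref{t2}.

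There is no serious obstacle here: the whole argument is a single application of Cauchy--Schwarz in $\mathscr{H}_2$ wrapped around the factorization $I_H=T_\Theta T_\Lambda^*$. The only point requiring care is the bookkeeping---keeping straight which Bessel bound is applied to which factor so that the lower bound for $\Lambda$ emerges as $B_2^{-1}$ and that for $\Theta$ as $B_1^{-1}$---and verifying that both synthesis operators are genuinely bounded, so that the displayed inner-product manipulations are valid, which is exactly what the two Bessel assumptions guarantee.
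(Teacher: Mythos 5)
Your proposal is correct and follows essentially the same route as the paper: both exploit $\Vert f\Vert^2=\langle T_{\Lambda}^* f, T_{\Theta}^* f\rangle$, apply Cauchy--Schwarz in $\mathscr{H}_2$, and absorb one Bessel bound to obtain the lower bounds $B_2^{-1}$ for $\Lambda$ and $B_1^{-1}$ for $\Theta$. The only cosmetic difference is that the paper squares the identity (working with $\Vert f\Vert^4$) while you do not, and your remark justifying the adjoint manipulation via boundedness of the synthesis operators is a welcome touch of extra care.
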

\begin{proof}
For each $f\in H$ we have
\begin{align*}
\Vert f\Vert^4&=\langle f, f\rangle^2\\
&=\langle T^*_{\Lambda}f, T^*_{\Theta}f\rangle^2\\
&\leq\Vert T^*_{\Lambda}f\Vert^2 \Vert T^*_{\Theta}f\Vert^2\\
&=\big(\sum_{j\in\Bbb I}v_j^2\Vert \Lambda_j \pi_{W_j}f\Vert^2\big)\big(\sum_{j\in\Bbb I}v_j^2\Vert \Theta_j \pi_{W_j}f\Vert^2\big)\\
&\leq\big(\sum_{j\in\Bbb I}v_j^2\Vert \Lambda_j \pi_{W_j}f\Vert^2\big) B_2 \Vert f\Vert^2,
\end{align*}
thus, $B_2^{-1}\Vert f\Vert^2\leq\sum_{j\in\Bbb I}v_j^2\Vert \Lambda_j \pi_{W_j}f\Vert^2$. This means that $\Lambda$ is a g-fusion frame for $H$. Similarly, $\Theta$ is a g-fusion frame with the lower bound $B_1^{-1}$.
\end{proof}
\section{Dual g-Fusion Frames}
For definition of the dual g-fusion frames, we need the following theorem.
\begin{theorem}\label{dual}
Let $\Lambda=(W_j, \Lambda_j, v_j)$ be a g-fusion frame for $H$. Then $(S^{-1}_{\Lambda}W_j, \Lambda_j \pi_{W_j}S_{\Lambda}^{-1}, v_j)$ is a g-fusion frame for $H$.
\end{theorem}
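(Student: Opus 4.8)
The plan is to verify directly that the candidate family $\tilde{\Lambda}:=(S^{-1}_{\Lambda}W_j, \Lambda_j \pi_{W_j}S_{\Lambda}^{-1}, v_j)$ satisfies the two inequalities in (\ref{g}), by collapsing its frame sum to the quantity $\langle S^{-1}_{\Lambda}f, f\rangle$ and then invoking the operator bounds on $S^{-1}_{\Lambda}$.

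First I would record the preliminary structural facts. Since $\Lambda$ is a g-fusion frame, $S_{\Lambda}$ is bounded, positive, self-adjoint and invertible with $A I\leq S_{\Lambda}\leq B I$; consequently $S^{-1}_{\Lambda}$ is likewise bounded, positive and self-adjoint, and satisfies $B^{-1}I\leq S^{-1}_{\Lambda}\leq A^{-1}I$ by operator monotonicity of inversion. Because $S^{-1}_{\Lambda}$ is a topological isomorphism of $H$, each $S^{-1}_{\Lambda}W_j$ is again a closed subspace, and each $\Lambda_j \pi_{W_j}S^{-1}_{\Lambda}$ lies in $\mathcal{B}(H,H_j)$, so $\tilde{\Lambda}$ is a legitimate triple to test.

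The crux is to simplify $\Lambda_j \pi_{W_j}S^{-1}_{\Lambda}\pi_{S^{-1}_{\Lambda}W_j}$. Here I would apply the commutation identity $\pi_{uV}u\pi_{V}=u\pi_{V}$ from the introduction with $u=S^{-1}_{\Lambda}$ and $V=W_j$, giving $\pi_{S^{-1}_{\Lambda}W_j}S^{-1}_{\Lambda}\pi_{W_j}=S^{-1}_{\Lambda}\pi_{W_j}$. Taking adjoints of both sides and using that $S^{-1}_{\Lambda}$ and the projections are self-adjoint yields $\pi_{W_j}S^{-1}_{\Lambda}\pi_{S^{-1}_{\Lambda}W_j}=\pi_{W_j}S^{-1}_{\Lambda}$. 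Composing on the left with $\Lambda_j$ then produces the key reduction
$$\Lambda_j \pi_{W_j}S^{-1}_{\Lambda}\pi_{S^{-1}_{\Lambda}W_j}f=\Lambda_j \pi_{W_j}S^{-1}_{\Lambda}f, \qquad f\in H.$$

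With this reduction in hand, the frame sum collapses: substituting $g=S^{-1}_{\Lambda}f$ into the identity $\sum_{j\in\Bbb J}v_j^2\Vert \Lambda_j \pi_{W_j}g\Vert^2=\langle S_{\Lambda}g, g\rangle$ gives
$$\sum_{j\in\Bbb J}v_j^2\Vert \Lambda_j \pi_{W_j}S^{-1}_{\Lambda}\pi_{S^{-1}_{\Lambda}W_j}f\Vert^2=\langle S_{\Lambda}S^{-1}_{\Lambda}f, S^{-1}_{\Lambda}f\rangle=\langle f, S^{-1}_{\Lambda}f\rangle=\langle S^{-1}_{\Lambda}f, f\rangle.$$
Finally, applying $B^{-1}I\leq S^{-1}_{\Lambda}\leq A^{-1}I$ bounds this quantity between $B^{-1}\Vert f\Vert^2$ and $A^{-1}\Vert f\Vert^2$, which are precisely the lower and upper frame bounds for $\tilde{\Lambda}$. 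I expect the only delicate point to be the bookkeeping with the projections --- in particular remembering to pass to adjoints so that the given identity applies in the correct order; once the reduction $\Lambda_j \pi_{W_j}S^{-1}_{\Lambda}\pi_{S^{-1}_{\Lambda}W_j}=\Lambda_j \pi_{W_j}S^{-1}_{\Lambda}$ is established, the rest is a routine computation.
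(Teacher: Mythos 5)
Your proof is correct, and it takes a genuinely different (and in fact sharper) route than the paper. The paper proves the two bounds separately: for the upper bound it applies the Bessel inequality of $\Lambda$ to the vector $S^{-1}_{\Lambda}f$, obtaining the constant $B\Vert S^{-1}_{\Lambda}\Vert^2$; for the lower bound it starts from the reconstruction formula (\ref{3}) and applies Cauchy--Schwarz to get $B^{-1}$. You instead collapse the entire frame sum into the exact identity $\sum_{j\in\Bbb J}v_j^2\Vert \Lambda_j\pi_{W_j}S^{-1}_{\Lambda}\pi_{S^{-1}_{\Lambda}W_j}f\Vert^2=\langle S^{-1}_{\Lambda}f, f\rangle$ and read off both bounds from $B^{-1}I\leq S^{-1}_{\Lambda}\leq A^{-1}I$. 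This buys you the canonical dual bounds $B^{-1}$ and $A^{-1}$, which improve on the paper's upper bound $B\Vert S^{-1}_{\Lambda}\Vert^2$ (at best $B/A^2\geq 1/A$, with equality only when $A=B$), and it avoids the Cauchy--Schwarz detour entirely; indeed, your identity is exactly what the paper derives only \emph{after} the theorem, in the computation showing $S_{\tilde{\Lambda}}=S^{-1}_{\Lambda}$, so your argument front-loads that observation to make the theorem immediate. A further merit of your write-up is that you justify the projection reduction $\pi_{W_j}S^{-1}_{\Lambda}\pi_{S^{-1}_{\Lambda}W_j}=\pi_{W_j}S^{-1}_{\Lambda}$ explicitly, by taking adjoints in the identity $\pi_{uV}u\pi_{V}=u\pi_{V}$ with $u=S^{-1}_{\Lambda}$ self-adjoint, whereas the paper uses this equality silently in the very first line of its proof.
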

\begin{proof}
 Let $A,B$ be the g-fusion frame bounds of $\Lambda$ and $f\in H$, then
\begin{align*}
\sum_{j\in\Bbb J}v^2_j\Vert \Lambda_{j}\pi_{W_j}S_{\Lambda}^{-1}\pi_{S^{-1}_{\Lambda}W_j}f\Vert^2&=\sum_{j\in\Bbb J}v^2_j\Vert \Lambda_{j}\pi_{W_j}S_{\Lambda}^{-1}f\Vert^2\\
&\leq B\Vert S_{\Lambda}^{-1}\Vert^2 \Vert f\Vert^2.
\end{align*}
Now, to get the lower bound, by using (\ref{3}) we can write
\begin{align*}
\Vert f\Vert^4&=\big\vert\langle\sum_{j\in\Bbb J}v_j^2\pi_{W_j}\Lambda^*_J\Lambda_j\pi_{W_j}S^{-1}_{\Lambda}f, f\rangle\big\vert^2\\
&=\big\vert\sum_{j\in\Bbb J}v_j^2\langle\Lambda_j\pi_{W_j}S^{-1}_{\Lambda}f, \Lambda_j\pi_{W_j}f\rangle\big\vert^2\\
&\leq\sum_{j\in\Bbb J}v_j^2\Vert \Lambda_j\pi_{W_j}S^{-1}_{\Lambda}f\Vert^2 \sum_{j\in\Bbb J}v_j^2\Vert\Lambda_j\pi_{W_j}f\Vert^2\\
&\leq \sum_{j\in\Bbb J}v_j^2\Vert\Lambda_j\pi_{W_j} S_{\Lambda}^{-1}\pi_{S^{-1}_{\Lambda}W_j}f\Vert^2\big(B\Vert f\Vert^2\big),
\end{align*}
therefore
\begin{align*}
B^{-1}\Vert f\Vert^2\leq \sum_{j\in\Bbb J}v_j^2\Vert\Lambda_j\pi_{W_j} S_{\Lambda}^{-1}\pi_{S^{-1}_{\Lambda}W_j}f\Vert^2.
\end{align*}
\end{proof}
Now, by Theorem \ref{dual}, $\tilde{\Lambda}=(S^{-1}_{\Lambda}W_j, \Lambda_j\pi_{W_j} S_{\Lambda}^{-1}, v_j)$ is a g-fusion frame for $H$. Then, $\tilde{\Lambda}$ is called the \textit{(canonical) dual g-fusion frame}  of $\Lambda$. Let $S_{\tilde{\Lambda}}=T_{\tilde{\Lambda}}T^*_{\tilde{\Lambda}}$ is the g-fusion frame operator of  $\tilde{\Lambda}$. Then, for each $f\in H$ we get
$$T^*_{\tilde{\Lambda}}f=\lbrace v_j\Lambda_j\pi_{W_j}S^{-1}_{\Lambda}\pi_{S^{-1}_{\Lambda W_j}}f\rbrace=\lbrace v_j\Lambda_j\pi_{W_j} S^{-1}_{\Lambda}f\rbrace=T^*_{\Lambda}(S^{-1}_{\Lambda}f),$$
so $T_{\Lambda}T^*_{\tilde{\Lambda}}=I_H$. Also, we have for each $f\in H$,
\begin{align*}
\langle S_{\tilde{\Lambda}}f, f\rangle&=\sum_{j\in\Bbb J}v_j^2\Vert\Lambda_j\pi_{W_j} S_{\Lambda}^{-1}\pi_{S^{-1}_{\Lambda}W_j}f\Vert^2\\
&=\sum_{j\in\Bbb J}v_j^2\Vert\Lambda_j \pi_{W_j}S_{\Lambda}^{-1}f\Vert^2\\
&=\langle S_{\Lambda}(S_{\Lambda}^{-1}f), S_{\Lambda}^{-1}f\rangle\\
&=\langle S_{\Lambda}^{-1}f, f\rangle
\end{align*}
thus, $S_{\tilde{\Lambda}}=S_{\Lambda}^{-1}$ and by (\ref{3}), we get for each $f\in H$
\begin{align}\label{frame}
f=\sum_{j\in\Bbb J}v_j^2\pi_{W_j}\Lambda^*_j\tilde{\Lambda_j}\pi_{\tilde{W_j}}f=
\sum_{j\in\Bbb J}v_j^2\pi_{\tilde{W_j}}\tilde{\Lambda_j}^*\Lambda_j\pi_{W_j}f,
\end{align}
where $\tilde{W_j}:=S^{-1}_{\Lambda}W_j  \ , \ \tilde{\Lambda_j}:=\Lambda_j \pi_{W_j}S_{\Lambda}^{-1}.$

The following Theorem  shows that the canonical dual g-fusion frame
gives rise to expansion coefficients with the minimal norm.
\begin{theorem}\label{min}
Let $\Lambda$ be a g-fusion frame with canonical dual $\tilde{\Lambda}$. 
For each $g_j\in H_j$, put $f=\sum_{j\in\Bbb J}v_j^2\pi_{W_j}\Lambda^*_j g_j$. Then
$$\sum_{j\in\Bbb J}\Vert g_j\Vert^2=\sum_{j\in\Bbb J}v_j^2\Vert \tilde{\Lambda_j}\pi_{\tilde{W_j}}f\Vert^2+\sum_{j\in\Bbb J}\Vert g_j-v_j^2\tilde{\Lambda_j}\pi_{\tilde{W_j}}f\Vert^2.$$
\end{theorem}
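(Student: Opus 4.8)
The plan is to read this as the classical minimal-norm identity for the canonical dual, whose proof is a single orthogonal decomposition carried out inside $\mathscr{H}_2$. Everything reduces to exhibiting, for the prescribed sequence $\{g_j\}$, two coefficient sequences in $\mathscr{H}_2$ --- one lying in the range of $T^*_\Lambda$ and one lying in $\ker T_\Lambda$ --- whose sum reproduces $f$, and then invoking the orthogonality of these two subspaces.

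First I would pin down the canonical coefficients. From the computation preceding (\ref{frame}) we already know $T^*_{\tilde\Lambda} f = T^*_\Lambda(S^{-1}_\Lambda f) = \{v_j \Lambda_j\pi_{W_j}S^{-1}_\Lambda f\}$, and by the projection identity $\pi_{uV}u\pi_V = u\pi_V$ this equals $\{v_j\,\tilde{\Lambda_j}\pi_{\tilde{W_j}} f\}$. Writing $c_j := \tilde{\Lambda_j}\pi_{\tilde{W_j}} f$, the sequence $\{v_j c_j\}$ is therefore an analysis sequence, hence $\{v_j c_j\}\in\mathcal{R}(T^*_\Lambda)$.

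Next I would check that the data sequence synthesizes back to $f$. Using the definitions of $T_\Lambda$ and of $f$, I get $T_\Lambda\{v_j g_j\}=\sum_{j\in\Bbb J}v_j\pi_{W_j}\Lambda^*_j(v_j g_j)=\sum_{j\in\Bbb J}v_j^2\pi_{W_j}\Lambda^*_j g_j=f$, while $T_\Lambda\{v_j c_j\}=T_\Lambda T^*_\Lambda S^{-1}_\Lambda f=S_\Lambda S^{-1}_\Lambda f=f$. Subtracting, the sequence $\{v_j g_j\}-\{v_j c_j\}$ is killed by $T_\Lambda$, i.e. it lies in $\ker T_\Lambda$. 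Since for any bounded operator $\ker T_\Lambda=\mathcal{R}(T^*_\Lambda)^{\perp}$, the two pieces $\{v_j c_j\}$ and $\{v_j g_j\}-\{v_j c_j\}$ are orthogonal in $\mathscr{H}_2$.

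The decomposition $\{v_j g_j\}=\{v_j c_j\}+\big(\{v_j g_j\}-\{v_j c_j\}\big)$ together with the Pythagorean theorem then gives
\[
\|\{v_j g_j\}\|^2=\|\{v_j c_j\}\|^2+\|\{v_j g_j\}-\{v_j c_j\}\|^2,
\]
and expanding each $\mathscr{H}_2$-norm as $\sum_{j\in\Bbb J}\|\cdot\|^2$ produces the three-term identity whose middle term is exactly $\sum_{j\in\Bbb J}v_j^2\|\tilde{\Lambda_j}\pi_{\tilde{W_j}}f\|^2$. I expect the one genuinely delicate point to be the bookkeeping of the weights $v_j$: one must distribute them consistently across the three norms so that the expansion matches the stated identity exactly, and it is precisely in tracking these factors (rather than in any deeper idea) that care is required.
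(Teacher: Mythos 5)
Your plan is the classical one and its core is internally sound, but the final step --- the one you yourself flagged as ``delicate bookkeeping of the weights'' --- is precisely where it fails, and no redistribution of the $v_j$ can repair it. With $c_j:=\tilde{\Lambda_j}\pi_{\tilde{W_j}}f$, your Pythagorean decomposition of $a=\{v_jg_j\}$ against $b=\{v_jc_j\}\in\mathcal{R}(T^*_\Lambda)$ correctly yields
\begin{equation*}
\sum_{j\in\Bbb J}v_j^2\Vert g_j\Vert^2=\sum_{j\in\Bbb J}v_j^2\Vert c_j\Vert^2+\sum_{j\in\Bbb J}v_j^2\Vert g_j-c_j\Vert^2,
\end{equation*}
whereas the theorem asserts $\sum_j\Vert g_j\Vert^2=\sum_j v_j^2\Vert c_j\Vert^2+\sum_j\Vert g_j-v_j^2c_j\Vert^2$. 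These are genuinely different identities. Indeed, the orthogonality $\langle b,a-b\rangle=0$ gives the cross-term identity $\mathrm{Re}\sum_j v_j^2\langle g_j,c_j\rangle=\sum_j v_j^2\Vert c_j\Vert^2$; expanding the stated right-hand side with it, one finds it exceeds the stated left-hand side by $\sum_j v_j^2(v_j^2-1)\Vert c_j\Vert^2$, which vanishes only when all $v_j=1$. A rank-one check confirms this: take $H=\Bbb C$, a single index, $W=H$, $\Lambda=I$, $v^2=2$; then $f=2g$, $c=g$, and the stated identity reads $\vert g\vert^2=2\vert g\vert^2+\vert g\vert^2$, which is false for $g\neq0$. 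So your closing claim that expanding the $\mathscr{H}_2$-norms ``produces the three-term identity'' as stated cannot be carried out: your argument proves the correct weighted version displayed above, not the theorem verbatim.

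For comparison, the paper does not use the $\mathscr{H}_2$-decomposition at all: it computes $\sum_j v_j^2\Vert c_j\Vert^2=\langle f,S^{-1}_\Lambda f\rangle=\sum_j v_j^2\langle g_j,c_j\rangle$ (so the cross term is real) and then expands $\sum_j\Vert g_j-v_j^2c_j\Vert^2$ directly; but in that expansion the term $\Vert v_j^2c_j\Vert^2=v_j^4\Vert c_j\Vert^2$ is recorded as $v_j^2\Vert c_j\Vert^2$, and it is exactly this slip that makes the claimed identity appear to close. In other words, the defect lies in the statement (its weights are misplaced unless $v_j\equiv1$), the paper's proof contains a compensating algebra error, and your Pythagorean route is actually the more reliable argument: stopped at the weighted identity above, it is a complete and correct proof of the true minimal-norm property; pushed to the theorem as printed, it would require the extra hypothesis $\sum_j v_j^2(v_j^2-1)\Vert c_j\Vert^2=0$, which is not available.
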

\begin{proof}
We can write again
\begin{align*}
\sum_{j\in\Bbb J}v_j^2\Vert \tilde{\Lambda_j}\pi_{\tilde{W_j}}f\Vert^2&=\langle f, S^{-1}_{\Lambda}f\rangle\\
&=\sum_{j\in\Bbb J}v_j^2\langle\pi_{W_j}\Lambda^*_j g_j, S_{\Lambda}^{-1}f \rangle\\
&=\sum_{j\in\Bbb J}v_j^2\langle g_j, \Lambda_j\pi_{W_j}S_{\Lambda}^{-1}f \rangle\\
&=\sum_{j\in\Bbb J}v_j^2\langle g_j, \tilde{\Lambda_j}\pi_{\tilde{W_j}} f \rangle.
\end{align*}
Therefore, $\mbox{Im}\Big(\sum_{j\in\Bbb J}v_j^2\langle g_j, \tilde{\Lambda_j}\pi_{\tilde{W_j}} f \rangle\Big)=0$. So
\begin{align*}
\sum_{j\in\Bbb J}\Vert g_j-v_j^2\tilde{\Lambda_j}\pi_{\tilde{W_j}}f\Vert^2=\sum_{j\in\Bbb J}\Vert g_j\Vert^2 -2\sum_{j\in\Bbb J}v_j^2\langle g_j, \tilde{\Lambda_j}\pi_{\tilde{W_j}} f \rangle+\sum_{j\in\Bbb J}v_j^2\Vert \tilde{\Lambda_j}\pi_{\tilde{W_j}}f\Vert^2
\end{align*}
and  the proof completes.
\end{proof}
\section{Gf-Complete and g-Fusion Frame Sequences}
\begin{definition}
We say that  $(W_j, \Lambda_j)$ is  \textit{gf-complete} , if
$\overline{\mbox{span}}\lbrace \pi_{W_j}\Lambda^*_j H_j\rbrace=H.$
\end{definition}
Now, it is easy to check that $(W_j, \Lambda_j)$ is gf-complete if and only if
$$\lbrace f: \ \Lambda_j \pi_{W_j}f=0 , \ j\in\Bbb J\rbrace=\lbrace 0\rbrace.$$
\begin{proposition}\label{p3}
If $\Lambda=(W_j, \Lambda_j, v_j)$ is a g-fusion frame for $H$, then $(W_j, \Lambda_j)$ is a gf-complete.
\end{proposition}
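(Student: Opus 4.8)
The plan is to bypass the closure-of-span formulation of gf-completeness and instead work with the equivalent kernel condition recorded immediately before the statement, namely that $(W_j,\Lambda_j)$ is gf-complete if and only if
$$\lbrace f\in H: \ \Lambda_j\pi_{W_j}f=0 \ \text{ for all } j\in\Bbb J\rbrace=\lbrace 0\rbrace.$$
This reformulation is convenient because the lower g-fusion frame inequality in (\ref{g}) controls precisely the quantity $\sum_{j\in\Bbb J}v_j^2\Vert\Lambda_j\pi_{W_j}f\Vert^2$, so the whole argument reduces to a single substitution.

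Concretely, I would let $A,B$ denote the g-fusion frame bounds of $\Lambda$ and take any $f\in H$ satisfying $\Lambda_j\pi_{W_j}f=0$ for every $j\in\Bbb J$. Then each summand $v_j^2\Vert\Lambda_j\pi_{W_j}f\Vert^2$ vanishes, so the middle term of (\ref{g}) is $0$. The lower bound therefore forces
$$A\Vert f\Vert^2\leq\sum_{j\in\Bbb J}v_j^2\Vert\Lambda_j\pi_{W_j}f\Vert^2=0,$$
and since $A>0$ this yields $\Vert f\Vert=0$, i.e. $f=0$. Hence the displayed kernel set contains only the zero vector, which by the equivalent characterization is exactly the assertion that $(W_j,\Lambda_j)$ is gf-complete.

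There is essentially no obstacle here: the result is a direct consequence of the strict positivity of the lower frame bound together with the kernel criterion for gf-completeness, and no approximation, convergence, or closed-range machinery is needed. The only point that deserves a line of justification, rather than an outright assumption, is the equivalence between the spanning condition $\overline{\mbox{span}}\lbrace\pi_{W_j}\Lambda^*_j H_j\rbrace=H$ and the kernel condition; but this is already established in the remark preceding the proposition (it follows from the fact that the orthogonal complement of $\overline{\mbox{span}}\lbrace\pi_{W_j}\Lambda^*_j H_j\rbrace$ consists precisely of those $f$ with $\langle\pi_{W_j}\Lambda^*_j g,f\rangle=\langle g,\Lambda_j\pi_{W_j}f\rangle=0$ for all $g\in H_j$ and all $j$, i.e. those $f$ with $\Lambda_j\pi_{W_j}f=0$ for all $j$), so I would simply cite it and keep the proof to the short computation above.
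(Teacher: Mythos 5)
Your proof is correct and takes essentially the same route as the paper's: the paper likewise shows that any $f\in(\mbox{span}\lbrace \pi_{W_j}\Lambda^*_j H_j\rbrace)^{\perp}$ satisfies $\Lambda_j\pi_{W_j}f=0$ for all $j\in\Bbb J$ (via the same adjoint identity $\langle \Lambda_j\pi_{W_j}f, g_j\rangle=\langle f, \pi_{W_j}\Lambda^*_j g_j\rangle$ you cite for the kernel criterion) and then invokes the lower frame bound to force $f=0$. The only cosmetic difference is that you pass through the kernel reformulation stated before the proposition, whereas the paper argues directly on the orthogonal complement of the span.
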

\begin{proof}
Let $f\in(\mbox{span}\lbrace \pi_{W_j}\Lambda^*_j H_j\rbrace)^{\perp}\subseteq H$. For each $j\in\Bbb J$ and $g_j\in H_j$ we have
$$\langle \Lambda_j\pi_{W_j}f, g_j\rangle=\langle f, \pi_{W_j}\Lambda^*_j g_j\rangle=0,$$
so, $\Lambda_j\pi_{W_j}f=0$ for all $j\in\Bbb J$. Since $\Lambda$ is a g-fusion frame for $H$, then $\Vert f\Vert=0$. Thus $f=0$ and we get $(\mbox{span}\lbrace \pi_{W_j}\Lambda^*_j H_j\rbrace)^{\perp}=\lbrace0\rbrace$.
\end{proof}
In the following, we want to check that if a member is removed from a g-fusion frame, will the new set remain a g-fusion frame or not?
\begin{theorem}\label{del}
Let $\Lambda=(W_j, \Lambda_j, v_j)$ be a g-fusion frame for $H$ with bounds $A, B$ and $\tilde{\Lambda}=(S^{-1}_{\Lambda}W_j, \Lambda_j \pi_{W_j}S^{-1}_{\Lambda}, v_j)$ be a canonical dual g-fusion frame. Suppose that $j_0\in\Bbb J$.
\begin{enumerate}
\item If there is a $g_0\in H_{j_0}\setminus\lbrace 0\rbrace$ such that $\tilde\Lambda_{j_0}\pi_{\tilde{W}_{j_0}}\pi_{W_{j_0}}\Lambda^*_{j_0}g_0=g_0$ and $v_{j_0}=1$, then $(W_j, \Lambda_j)_{j\neq j_0}$ is not gf-complete in $H$.
\item If there is a $f_0\in H_{j_0}\setminus\lbrace0\rbrace$ such that $\pi_{W_{j_0}}\Lambda^*_{j_0}\tilde\Lambda_{j_0}\pi_{\tilde{W}_{j_0}}f_0=f_0$ and $v_{j_0}=1$, then $(W_j, \Lambda_j)_{j\neq j_0}$ is not gf-complete in $H$.
\item If $I-\Lambda_{j_0}\pi_{W_{j_0}}\pi_{\tilde{W_{j_0}}}\tilde{\Lambda}^*_{j_0}$ is bounded invertible on $H_{j_0}$, then $(W_j, \Lambda_j, v_j)_{j\neq j_0}$ is a g-fusion frame for $H$.
\end{enumerate}
\end{theorem}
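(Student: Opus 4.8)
The plan is to run all three parts through the single \emph{reduced} operator
\[
S_0:=\sum_{j\neq j_0}v_j^2\,\pi_{W_j}\Lambda_j^\ast\Lambda_j\pi_{W_j},\qquad
S_\Lambda=S_0+A^\ast A,
\]
where I set $A:=v_{j_0}\Lambda_{j_0}\pi_{W_{j_0}}\in\mathcal{B}(H,H_{j_0})$, so that $A^\ast=v_{j_0}\pi_{W_{j_0}}\Lambda_{j_0}^\ast$ and $A^\ast A=v_{j_0}^2\pi_{W_{j_0}}\Lambda_{j_0}^\ast\Lambda_{j_0}\pi_{W_{j_0}}$ is exactly the $j_0$-th term dropped from the frame operator. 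Since $\langle S_0 f,f\rangle=\sum_{j\neq j_0}v_j^2\Vert\Lambda_j\pi_{W_j}f\Vert^2$, the reduced family $(W_j,\Lambda_j)_{j\neq j_0}$ fails to be gf-complete \emph{iff} $S_0$ is not injective (by the characterization of gf-completeness preceding Proposition \ref{p3}), and it is a g-fusion frame \emph{iff} $S_0$ is positive and invertible; its Bessel bound is automatic from $\langle S_0 f,f\rangle\le\langle S_\Lambda f,f\rangle\le B\Vert f\Vert^2$. Thus (1),(2) reduce to exhibiting a nonzero vector in $\ker S_0$, while (3) reduces to the invertibility of $S_0$.

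The common technical step is to recast the three hypotheses in terms of $A,A^\ast,S_\Lambda^{-1}$. Applying the identity $\pi_{uV}u\pi_V=u\pi_V$ from the introduction with $u=S_\Lambda^{-1}$ and $V=W_{j_0}$ (and its adjoint $\pi_V u^\ast\pi_{uV}=\pi_V u^\ast$) gives $\pi_{\tilde{W}_{j_0}}S_\Lambda^{-1}\pi_{W_{j_0}}=S_\Lambda^{-1}\pi_{W_{j_0}}$ and $\pi_{W_{j_0}}S_\Lambda^{-1}\pi_{\tilde{W}_{j_0}}=\pi_{W_{j_0}}S_\Lambda^{-1}$. Since $\tilde{\Lambda}_{j_0}=\Lambda_{j_0}\pi_{W_{j_0}}S_\Lambda^{-1}$, these collapse the projection onto $\tilde{W}_{j_0}$:
\[
\tilde{\Lambda}_{j_0}\pi_{\tilde{W}_{j_0}}\pi_{W_{j_0}}\Lambda_{j_0}^\ast
=\Lambda_{j_0}\pi_{W_{j_0}}S_\Lambda^{-1}\pi_{W_{j_0}}\Lambda_{j_0}^\ast
=v_{j_0}^{-2}\,A S_\Lambda^{-1}A^\ast,
\]
and likewise $\pi_{W_{j_0}}\Lambda_{j_0}^\ast\tilde{\Lambda}_{j_0}\pi_{\tilde{W}_{j_0}}=v_{j_0}^{-2}A^\ast A S_\Lambda^{-1}$ and $\Lambda_{j_0}\pi_{W_{j_0}}\pi_{\tilde{W}_{j_0}}\tilde{\Lambda}_{j_0}^\ast=v_{j_0}^{-2}AS_\Lambda^{-1}A^\ast$. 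Hence (taking $v_{j_0}=1$ where assumed) the hypothesis of (1) reads $AS_\Lambda^{-1}A^\ast g_0=g_0$ on $H_{j_0}$, that of (2) reads $A^\ast A S_\Lambda^{-1}f_0=f_0$, and the operator in (3) is, after absorbing the weight into $A$, precisely $I_{H_{j_0}}-AS_\Lambda^{-1}A^\ast$.

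For (1) I would set $h:=S_\Lambda^{-1}A^\ast g_0$; then $S_\Lambda^{-1}A^\ast A\,h=S_\Lambda^{-1}A^\ast(AS_\Lambda^{-1}A^\ast g_0)=S_\Lambda^{-1}A^\ast g_0=h$, so $(I-S_\Lambda^{-1}A^\ast A)h=0$ and therefore $S_0h=S_\Lambda(I-S_\Lambda^{-1}A^\ast A)h=0$. Moreover $h\neq0$, since $h=0$ would force $AS_\Lambda^{-1}A^\ast g_0=0\neq g_0$. Then $\sum_{j\neq j_0}v_j^2\Vert\Lambda_j\pi_{W_j}h\Vert^2=\langle S_0h,h\rangle=0$ yields $\Lambda_j\pi_{W_j}h=0$ for all $j\neq j_0$, so the reduced family is not gf-complete. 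Part (2) is the mirror image: with $h:=S_\Lambda^{-1}f_0\neq0$, the hypothesis $A^\ast A S_\Lambda^{-1}f_0=f_0$ gives $S_\Lambda^{-1}A^\ast A\,h=S_\Lambda^{-1}f_0=h$, hence $S_0h=0$ and the same conclusion. (For the displayed expression in (2) to typecheck, $f_0$ should be read as an element of $H$ rather than $H_{j_0}$.)

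For (3) I would factor $S_0=S_\Lambda\bigl(I-S_\Lambda^{-1}A^\ast A\bigr)$. As $S_\Lambda$ is invertible, $S_0$ is invertible iff $I-S_\Lambda^{-1}A^\ast A$ is invertible on $H$, and by the standard fact that $I-CB$ is invertible iff $I-BC$ is invertible (with $B=A\colon H\to H_{j_0}$ and $C=S_\Lambda^{-1}A^\ast\colon H_{j_0}\to H$) this holds iff $I_{H_{j_0}}-AS_\Lambda^{-1}A^\ast$ is invertible on $H_{j_0}$, which is the hypothesis. Being a sum of positive operators $S_0$ is positive, so an invertible $S_0$ is bounded below; together with the automatic upper bound this makes $(W_j,\Lambda_j,v_j)_{j\neq j_0}$ a g-fusion frame. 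I expect the main obstacle to be exactly the second paragraph: converting the hypotheses — stated through the dual maps $\tilde{\Lambda}_{j_0}$ and the projections $\pi_{\tilde{W}_{j_0}}$ onto $S_\Lambda^{-1}W_{j_0}$ — into the clean forms $AS_\Lambda^{-1}A^\ast$ and $A^\ast A S_\Lambda^{-1}$, since this rests entirely on the projection identity $\pi_{uV}u\pi_V=u\pi_V$ and on carefully tracking which factor it simplifies (and the attendant weight $v_{j_0}$). Once the hypotheses are in this form, the eigenvector transfer in (1)--(2) and the $BC\leftrightarrow CB$ invertibility equivalence in (3) are routine.
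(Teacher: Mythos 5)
Your proof is correct, but it runs along a genuinely different track from the paper's. The paper never forms the reduced operator $S_0$: for (1) it applies the reconstruction formula (\ref{frame}) to the vector $\pi_{W_{j_0}}\Lambda^*_{j_0}g_0$ and then invokes the minimal-norm property of the canonical dual coefficients (Theorem \ref{min}) with the sequence $u_{j_0,j}=\delta_{j_0,j}\,g_0$ to force $\tilde{\Lambda}_j\pi_{\tilde{W_j}}\pi_{W_{j_0}}\Lambda^*_{j_0}g_0=0$ for all $j\neq j_0$, which produces exactly your witness $h=S_{\Lambda}^{-1}\pi_{W_{j_0}}\Lambda^*_{j_0}g_0$; part (2) is reduced to part (1) by checking that $\tilde{\Lambda}_{j_0}\pi_{\tilde{W}_{j_0}}f_0$ is a nonzero fixed point of the operator in (1); and part (3) is proved by an inequality rather than a factorization: isolating the $j_0$-term of (\ref{frame}) as in (\ref{com}), estimating the tail by $\tilde{B}\Vert\Lambda_{j_0}\Vert^2\sum_{j\neq j_0}v_j^2\Vert\Lambda_j\pi_{W_j}f\Vert^2$, and applying the bounded inverse to obtain explicit frame bounds $A/C$ and $B$. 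Your route --- $S_0=S_{\Lambda}-A^{\ast}A$, the two-line kernel computations $S_0S_{\Lambda}^{-1}A^{\ast}g_0=A^{\ast}g_0-A^{\ast}\bigl(AS_{\Lambda}^{-1}A^{\ast}g_0\bigr)=0$ and $S_0S_{\Lambda}^{-1}f_0=f_0-A^{\ast}AS_{\Lambda}^{-1}f_0=0$, and invertibility via $S_0=S_{\Lambda}\bigl(I-S_{\Lambda}^{-1}A^{\ast}A\bigr)$ plus the standard $I-CB\leftrightarrow I-BC$ equivalence --- is shorter, bypasses Theorem \ref{min} entirely, and exhibits (1) and (2) as adjoint-mirror instances of one statement, while the characterization of gf-completeness via injectivity of the positive operator $S_0$ is exactly right. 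What the paper's argument buys in exchange is a lower bound $A/C$ computed explicitly from $\Vert(I-\Lambda_{j_0}\pi_{W_{j_0}}\pi_{\tilde{W}_{j_0}}\tilde{\Lambda}^*_{j_0})^{-1}\Vert$, $\tilde{B}$ and $\Vert\Lambda_{j_0}\Vert$, where yours yields $\Vert S_0^{-1}\Vert^{-1}$. Two of your side remarks are in fact vindicated by the paper itself: the hypothesis of (2) only typechecks with $f_0\in H$, which is how the paper's proof treats it; and the weight issue you gloss as ``absorbing the weight into $A$'' is present in the paper as well --- its equation (\ref{com}) silently drops the factor $v_{j_0}^2$ from the $j_0$-term --- so both your proof and the paper's actually establish (3) for the operator $I-v_{j_0}^2\Lambda_{j_0}\pi_{W_{j_0}}\pi_{\tilde{W}_{j_0}}\tilde{\Lambda}^*_{j_0}$, equivalently under the normalization $v_{j_0}=1$ already imposed in parts (1) and (2); you should state that normalization explicitly rather than hide it in the definition of $A$.
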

\begin{proof}
\textit{(1).} Since $\pi_{W_{j_0}}\Lambda^*_{j_0}g_0\in H$, then by (\ref{frame}),
$$\pi_{W_{j_0}}\Lambda^*_{j_0}g_0=\sum_{j\in\Bbb J}v_j^2\pi_{W_j}\Lambda^*_j\tilde{\Lambda_j}\pi_{\tilde{W_j}}\pi_{W_{j_0}}\Lambda^*_{j_0}g_0.$$
So,
$$\sum_{j\neq j_0}v_j^2\pi_{W_j}\Lambda^*_j\tilde{\Lambda_j}\pi_{\tilde{W_j}}\pi_{W_{j_0}}\Lambda^*_{j_0}g_0=0.$$
Let $u_{j_0, j}:=\delta_{j_0, j}g_0$, thus
$$\pi_{W_{j_0}}\Lambda^*_{j_0}g_0=\sum_{j\in\Bbb J}v_j^2\pi_{W_j}\Lambda^*_j u_{j_0, j}.$$
Then, by Theorem \ref{min}, we have
$$\sum_{j\in\Bbb J}\Vert u_{j_0, j}\Vert^2=\sum_{j\in\Bbb J}v_j^2\Vert \tilde{\Lambda}_j\pi_{\tilde{W_j}}\pi_{W_{j_0}}\Lambda^*_{j_0}g_0\Vert^2+\sum_{j\in\Bbb J}\Vert v_j^2\tilde{\Lambda}_j\pi_{\tilde{W_j}}\pi_{W_{j_0}}\Lambda^*_{j_0}g_0-u_{j_0, j}\Vert^2.$$
Consequently,
$$\Vert g_0\Vert^2=\Vert g_0\Vert^2+2\sum_{j\neq j_0}v_j^2\Vert \tilde{\Lambda}_j\pi_{\tilde{W_j}}\pi_{W_{j_0}}\Lambda^*_{j_0}g_0\Vert^2$$
and we get $\tilde{\Lambda}_j\pi_{\tilde{W_j}}\pi_{W_{j_0}}\Lambda^*_{j_0}g_0=0$.
Therefore,
$$\Lambda_j\pi_{W_j}S^{-1}_{\Lambda}\pi_{W_{j_0}}\Lambda^*_{j_0}g_0=\tilde{\Lambda}_j\pi_{\tilde{W_j}}\pi_{W_{j_0}}\Lambda^*_{j_0}g_0=0.$$
But, $g_0=\tilde\Lambda_{j_0}^*\pi_{\tilde{W}_{j_0}}\pi_{W_{j_0}}\Lambda^*_{j_0}g_0=\Lambda_{j_0}\pi_{W_{j_0}}S^{-1}_{\Lambda}\pi_{W_{j_0}}\Lambda^*_{j_0}g_0\neq0$, which implies that $S^{-1}_{\Lambda}\pi_{W_{j_0}}\Lambda^*_{j_0}g_0\neq0$ and this means that $(W_j, \Lambda_j)_{j\neq j_0}$ is not gf-complete in $H$.

\textit{(2).} Since $\pi_{W_{j_0}}\Lambda^*_{j_0}\tilde\Lambda_{j_0}\pi_{\tilde{W}_{j_0}}f_0=f_0\neq0$, we obtain $\tilde\Lambda_{j_0}\pi_{\tilde{W}_{j_0}}f_0\neq0$ and
$$\tilde\Lambda_{j_0}\pi_{\tilde{W}_{j_0}}\pi_{W_{j_0}}\Lambda^*_{j_0}\tilde\Lambda_{j_0}\pi_{\tilde{W}_{j_0}}f_0=\tilde\Lambda_{j_0}\pi_{\tilde{W}_{j_0}}f_0.$$
Now, the conclusion follows from \textit{(1)}.

\textit{(3)}. Using (\ref{frame}), we have for any $f\in H$
$$\Lambda_{j_0}\pi_{W_{j_0}}f=\sum_{j\in\Bbb J}v_j^2\Lambda_{j_0}\pi_{W_{j_0}}\pi_{\tilde{W_j}}\tilde{\Lambda}^*_j\Lambda_j\pi_{W_j}f.$$
So,
\begin{equation}\label{com}(I-\Lambda_{j_0}\pi_{W_{j_0}}\pi_{\tilde{W_{j_0}}}\tilde{\Lambda}^*_{j_0})\Lambda_{j_0}\pi_{W_{j_0}}f=\sum_{j\neq j_0}v_j^2\Lambda_{j_0}\pi_{W_{j_0}}\pi_{\tilde{W_j}}\tilde{\Lambda}^*_j\Lambda_j\pi_{W_j}f.
\end{equation}
On the other hand, we can write
\begin{small}
\begin{align*}
\big\Vert\sum_{j\neq j_0}v_j^2\Lambda_{j_0}\pi_{W_{j_0}}\pi_{\tilde{W_j}}\tilde{\Lambda}^*_j\Lambda_j\pi_{W_j}f\big\Vert^2&=\sup_{\Vert g\Vert=1}\big\vert\sum_{j\neq j_0}v_j^2\big\langle \Lambda_j\pi_{W_j}f, \tilde{\Lambda}_j\pi_{\tilde{W}_j}\pi_{W_{j_0}}\Lambda^*_{j_0}g \big\rangle\big\vert^2\\
&\leq\big(\sum_{j\neq j_0}v_j^2\Vert \Lambda_j\pi_{W_j}f\Vert^2\big)\sup_{\Vert g\Vert=1}\sum_{j\in\Bbb J}v_j^2\Vert \tilde{\Lambda}_j\pi_{\tilde{W}_j}\pi_{W_{j_0}}\Lambda^*_{j_0}g\Vert^2\\
&\leq\tilde{B}\Vert\Lambda_{j_0}\Vert^2(\sum_{j\neq j_0}v_j^2\Vert \Lambda_j\pi_{W_j}f\Vert^2)
\end{align*}
\end{small}
where, $\tilde{B}$ is the upper bound of $\tilde\Lambda$. Now, by (\ref{com}), we have
\begin{equation*}
\Vert\Lambda_{j_0}\pi_{W_{j_0}}f\Vert^2\leq\Vert(I-\Lambda_{j_0}\pi_{W_{j_0}}\pi_{\tilde{W_{j_0}}}\tilde{\Lambda}^*_{j_0})^{-1}\Vert^2 \tilde{B}\Vert\Lambda_{j_0}\Vert^2(\sum_{j\neq j_0}v_j^2\Vert \Lambda_j\pi_{W_j}f\Vert^2).
\end{equation*}
Therefore, there is a number $C>0$ such that
$$\sum_{j\in\Bbb J}v_j^2\Vert \Lambda_j\pi_{W_j}f\Vert^2\leq C\sum_{j\neq j_0}v_j^2\Vert \Lambda_j\pi_{W_j}f\Vert^2$$
and we conclude for each $f\in H$
$$\frac{A}{C}\Vert f\Vert^2\leq\sum_{j\neq j_0}v_j^2\Vert \Lambda_j\pi_{W_j}f\Vert^2\leq B\Vert f\Vert^2.$$
\end{proof}
\begin{theorem}
$\Lambda$ is a g-fusion frame for $H$ with bounds $A,B$ if and only if the following two conditions are satisfied:
\begin{enumerate}
\item[(I)] The pair $(W_j, \Lambda_j)$ is gf-complete.
\item[(II)] The operator
$$T_{\Lambda}: \lbrace f_j\rbrace_{j\in\Bbb J}\mapsto \sum_{j\in\Bbb J}v_j\pi_{W_j}\Lambda_j^* f_j$$
is a well-defined from $\mathscr{H}_2$ into $H$ and for each $\lbrace f_j\rbrace_{j\in\Bbb J}\in\mathcal{N}^{\perp}_{T_{\Lambda}}$,
\begin{equation}\label{e7}
A\sum_{j\in\Bbb J}\Vert f_j\Vert^2\leq \Vert T_{\Lambda}\lbrace f_j\rbrace_{j\in\Bbb J}\Vert^2\leq B\sum_{j\in\Bbb J}\Vert f_j\Vert^2.
\end{equation}
\end{enumerate}
\end{theorem}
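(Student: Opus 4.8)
The plan is to prove both implications by translating the two conditions into operator-theoretic statements about the synthesis operator $T_{\Lambda}$ and then invoking Theorem \ref{2.3} (g-fusion frame $\Leftrightarrow$ $T_{\Lambda}$ well-defined, bounded, surjective). The dictionary I will use throughout is: gf-completeness of $(W_j,\Lambda_j)$ is equivalent to $T_{\Lambda}^*$ being injective, i.e. $\mathcal{N}_{T_\Lambda^*}=\{0\}$, hence to $T_\Lambda$ having dense range; and the two-sided estimate in (II), imposed on $\mathcal{N}^\perp_{T_\Lambda}=\overline{\mathcal{R}_{T_\Lambda^*}}$, controls $T_\Lambda$ on the orthogonal complement of its kernel.

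For the forward direction, suppose $\Lambda$ is a g-fusion frame with bounds $A,B$. Condition (I) is immediate from Proposition \ref{p3}. For (II), the upper estimate holds on all of $\mathscr{H}_2$ since $\|T_\Lambda\|\le\sqrt B$ by Theorem \ref{t2}, so only the lower estimate on $\mathcal{N}^\perp_{T_\Lambda}$ remains. Here I would use that surjectivity of $T_\Lambda$ (Theorem \ref{2.3}) makes $T_\Lambda^*$ bounded below with closed range equal to $\mathcal{N}^\perp_{T_\Lambda}$; hence every $\{f_j\}\in\mathcal{N}^\perp_{T_\Lambda}$ equals $T_\Lambda^* g$ for a unique $g\in H$. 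Writing $\sum_j\|f_j\|^2=\langle S_\Lambda g,g\rangle$ and $\|T_\Lambda\{f_j\}\|^2=\|S_\Lambda g\|^2$, the target inequality $A\langle S_\Lambda g,g\rangle\le\|S_\Lambda g\|^2$ follows from $S_\Lambda\ge AI$: with $u=S_\Lambda^{1/2}g$ one has $\|S_\Lambda g\|^2=\langle S_\Lambda u,u\rangle\ge A\|u\|^2=A\langle S_\Lambda g,g\rangle$.

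For the converse, assume (I) and (II). First I extend the upper bound in (II) to all of $\mathscr{H}_2$: decomposing $\{f_j\}=P+N$ with $P\in\mathcal{N}^\perp_{T_\Lambda}$ and $N\in\mathcal{N}_{T_\Lambda}$, one has $T_\Lambda\{f_j\}=T_\Lambda P$ and $\|P\|\le\|\{f_j\}\|$, so $T_\Lambda$ is bounded with $\|T_\Lambda\|\le\sqrt B$ and $\Lambda$ is g-fusion Bessel with bound $B$ by Theorem \ref{t2}. Next, the lower estimate in (II) says $T_\Lambda$ is bounded below on $\mathcal{N}^\perp_{T_\Lambda}$, which forces $\mathcal{R}_{T_\Lambda}$ to be closed; combined with the dense range supplied by (I), this gives $\mathcal{R}_{T_\Lambda}=H$, i.e. $T_\Lambda$ is surjective. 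By Theorem \ref{2.3}, $\Lambda$ is a g-fusion frame, and in particular $S_\Lambda$ is positive and invertible. To recover the precise constant $A$, apply the lower bound in (II) to $\{f_j\}=T_\Lambda^* f\in\mathcal{N}^\perp_{T_\Lambda}$, obtaining $A\langle S_\Lambda f,f\rangle\le\|S_\Lambda f\|^2$; replacing $f$ by $S_\Lambda^{-1}f$ gives $A\langle S_\Lambda^{-1}f,f\rangle\le\|f\|^2$, i.e. $S_\Lambda^{-1}\le A^{-1}I$, equivalently $S_\Lambda\ge AI$, which is exactly the lower frame inequality $A\|f\|^2\le\sum_j v_j^2\|\Lambda_j\pi_{W_j}f\|^2$.

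The main obstacle is the converse, specifically assembling surjectivity of $T_\Lambda$ from the two hypotheses: one must recognize that (I) delivers dense range while the lower half of (II) delivers closed range, and only their conjunction yields $\mathcal{R}_{T_\Lambda}=H$. The secondary delicate point, present in both directions, is that $\mathcal{N}^\perp_{T_\Lambda}$ coincides with $\mathcal{R}_{T_\Lambda^*}$ only once closedness of the range is known, so the factorization $\{f_j\}=T_\Lambda^* g$ used to extract the sharp constants must be justified by surjectivity rather than assumed at the outset.
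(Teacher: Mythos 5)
Your proposal is correct, and its skeleton largely coincides with the paper's: in the forward direction both of you get (I) from Proposition \ref{p3}, the upper half of (\ref{e7}) from $\Vert T_{\Lambda}\Vert\leq\sqrt{B}$ (Theorem \ref{t2}), and then use surjectivity (Theorem \ref{2.3}) to identify $\mathcal{N}^{\perp}_{T_{\Lambda}}=\mathcal{R}_{T^*_{\Lambda}}$ and write $\lbrace f_j\rbrace=T^*_{\Lambda}g$, reducing the lower half to $A\langle S_{\Lambda}g,g\rangle\leq\Vert S_{\Lambda}g\Vert^2$; in the converse both of you obtain the Bessel bound via the orthogonal decomposition along $\mathcal{N}_{T_{\Lambda}}$, closed range from the lower half of (\ref{e7}), and surjectivity by combining this with the dense range that gf-completeness provides. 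Where you genuinely diverge is the endgame of the converse: the paper extracts the lower frame bound through pseudo-inverse machinery --- $\Vert T^{\dagger}_{\Lambda}\Vert^2\leq 1/A$, Lemma \ref{Ru} to pass to $(T^*_{\Lambda})^{\dagger}$, and the projection identity $\Vert f\Vert=\Vert(T^*_{\Lambda})^{\dagger}T^*_{\Lambda}f\Vert$ --- whereas you apply (\ref{e7}) directly to $\lbrace f_j\rbrace=T^*_{\Lambda}f$ to get $A\langle S_{\Lambda}f,f\rangle\leq\Vert S_{\Lambda}f\Vert^2$ and then invert, using that $S_{\Lambda}^{-1}\leq A^{-1}I$ is equivalent to $S_{\Lambda}\geq AI$. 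Your route is shorter and avoids Lemma \ref{Ru} entirely, at the cost of invoking operator monotonicity of inversion (or, equivalently, the substitution $f\mapsto S_{\Lambda}^{1/2}g$), which deserves a one-line justification; the paper's longer pseudo-inverse argument uses only its stated lemmas. Similarly, in the forward direction your $u=S_{\Lambda}^{1/2}g$ substitution replaces the paper's Cauchy--Schwarz cancellation; the two are interchangeable one-liners. Two small points, present in both write-ups but worth making explicit: the orthogonal decomposition with respect to $\mathcal{N}_{T_{\Lambda}}$ presupposes that $T_{\Lambda}$ is bounded, which follows from well-definedness alone by Theorem \ref{t2} ((3)$\Rightarrow$(1)); and your abstract formulation that boundedness below on $\mathcal{N}^{\perp}_{T_{\Lambda}}$ forces $\mathcal{R}_{T_{\Lambda}}$ closed is the correct statement of the paper's Cauchy-sequence argument, in which the $x_n$ should be taken in $\mathcal{N}^{\perp}_{T_{\Lambda}}$ rather than $\mathcal{N}_{T_{\Lambda}}$ as misprinted there.
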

\begin{proof}
First, suppose that $\Lambda$ is a g-fusion frame. By Proposition \ref{p3}, (I) is satisfied. By Theorem \ref{t2}, $T_{\Lambda}$ is a well-defined from $\mathscr{H}_2$ into $H$ and $\Vert T_{\Lambda}\Vert^2\leq B$. Now, the right-hand inequality in (\ref{e7}) is proved.

By Theorem \ref{2.3}, $T_{\Lambda}$ is surjective. So, $\mathcal{R}_{T^*_{\Lambda}}$ is closed. Thus
$$\mathcal{N}^{\perp}_{T_{\Lambda}}=\overline{\mathcal{R}_{T^*_{\Lambda}}}=\mathcal{R}_{T^*_{\Lambda}}.$$
Now, if $\lbrace f_j\rbrace_{j\in\Bbb J}\in\mathcal{N}^{\perp}_{T_{\Lambda}}$, then
$$\lbrace f_j\rbrace_{j\in\Bbb J}=T^*_{\Lambda}g=\lbrace v_j\Lambda_j \pi_{W_j}g\rbrace_{j\in\Bbb J}$$
for some $g\in H$. Therefore
\begin{align*}
(\sum_{j\in\Bbb J}\Vert f_j\Vert^2)^2&=(\sum_{j\in\Bbb J}v^2_j\Vert \Lambda_j \pi_{W_j}g\Vert^2)^2
\vert\langle S_{\Lambda}(g), g\rangle\vert^2\\
&\leq\Vert S_{\Lambda}(g)\Vert^2 \Vert g\Vert^2\\
&\leq\Vert S_{\Lambda}(g)\Vert^2  \big(\frac{1}{A}\sum_{j\in\Bbb J}v^2_j\Vert \Lambda_j \pi_{W_j}g\Vert^2\big).
\end{align*}
This implies that
$$A\sum_{j\in\Bbb J}\Vert f_j\Vert^2\leq\Vert S_{\Lambda}(g)\Vert^2=\Vert T_{\Lambda}\lbrace f_j\rbrace_{j\in\Bbb J}\Vert^2$$
and (II) is proved.

Conversely, Let $(W_j, \Lambda_j)$ be gf-complete and inequality
(\ref{e7}) is satisfied. Let $\lbrace t_j\rbrace_{j\in\Bbb
J}=\lbrace f_j\rbrace_{j\in\Bbb J}+\lbrace g_j\rbrace_{j\in\Bbb J}$,
where $\lbrace f_j\rbrace_{j\in\Bbb J}\in\mathcal{N}_{T_{\Lambda}}$
and $\lbrace g_j\rbrace_{j\in\Bbb
J}\in\mathcal{N}_{T_{\Lambda}}^{\perp}$. We get
\begin{align*}
\Vert T_{\Lambda}\lbrace t_j\rbrace_{j\in\Bbb J}\Vert^2&=\Vert T_{\Lambda}\lbrace g_j\rbrace_{j\in\Bbb J}\Vert^2\\
&\leq B\sum_{j\in\Bbb J}\Vert g_j\Vert^2\\
&\leq B\Vert \lbrace f_j\rbrace+\lbrace g_j\rbrace\Vert^2\\
&=B\Vert\lbrace t_j\rbrace_{j\in\Bbb J}\Vert^2.
\end{align*}
Thus, $\Lambda$ is a g-fusion Bessel sequence.

Assume that $\lbrace y_n\rbrace$ is a sequence of members of $\mathcal{R}_{T_{\Lambda}}$ such that $y_n\rightarrow y$ for some $y\in H$. So, there is a $\lbrace x_n\rbrace\in\mathcal{N}_{T_{\Lambda}}$ such that $T_{\Lambda}\{x_n\}=y_n$. By (\ref{e7}), we obtain
\begin{align*}
A\Vert\lbrace x_n-x_m\rbrace\Vert^2&\leq\Vert T_{\Lambda}\lbrace x_n-x_m\rbrace\Vert^2\\
&=\Vert T_{\Lambda}\lbrace x_n\rbrace -T_{\Lambda}\lbrace x_m\rbrace\Vert^2\\
&=\Vert y_n-y_m\Vert^2.
\end{align*}
Therefore, $\lbrace x_n\rbrace$ is a Cauchy sequence in $\mathscr{H}_2$. Therefore $\lbrace x_n\rbrace$ converges to some $x\in \mathscr{H}_2$, which by continuity of $T_{\Lambda}$,  we have $y=T_{\Lambda}(x)\in\mathcal{R}_{T_{\Lambda_j}}$. Hence $\mathcal{R}_{T_{\Lambda}}$ is closed. Since $\mbox{span}\lbrace \pi_{W_j}\Lambda_{j}^{\ast}(H_j)\rbrace\subseteq\mathcal{R}_{T_{\Lambda}}$,  by (I) we get $\mathcal{R}_{T_{\Lambda}}=H$.

 Let $T_{\Lambda}^\dagger$ denotes the pseudo-inverse of $T_{\Lambda}$. By Lemma \ref{Ru}(4),  $T_{\Lambda}T_{\Lambda}^{\dagger}$ is the orthogonal projection onto $\mathcal{R}_{T_{\Lambda}}=H$. Thus for any $\lbrace f_j\rbrace_{j\in\Bbb J}\in\mathscr{H}_2 $,
\begin{eqnarray*}
A\Vert T_{\Lambda}^{\dagger}T_{\Lambda}\lbrace f_j\rbrace\Vert^2\leq\Vert T_{\Lambda}T_{\Lambda}^{\dagger}T_{\Lambda}\lbrace f_j\rbrace \Vert^2=\Vert T_{\Lambda}\lbrace f_j\rbrace\Vert^2.
\end{eqnarray*}
By  Lemma \ref{Ru} (4),  $\mathcal{N}_{{T}_{\Lambda}^{\dagger}}=\mathcal{R}^{\bot}_{T_{\Lambda}}$, therefore
\begin{eqnarray*}
\Vert T_{\Lambda}^\dagger\Vert^2\leq\frac{1}{A}.
\end{eqnarray*}
Also by  Lemma \ref{Ru}(2), we have
$$ \Vert(T_{\Lambda}^\ast)^{\dagger}\Vert^2\leq\frac{1}{A}.$$
But  $(T_{\Lambda}^\ast)^{\dagger}T_{\Lambda}^\ast$ is the
orthogonal projection onto
\begin{eqnarray*}
\mathcal{R}_{(T_{\Lambda}^\ast)^\dagger}=\mathcal{R}_{(T_{\Lambda}^\dagger)^\ast}=\mathcal{N}_{T_{\Lambda}^\dagger}^{\bot}=\mathcal{R}_{T_{\Lambda}}=H.
\end{eqnarray*}
So, for all $f\in H$
\begin{align*}
\Vert f\Vert^2&=\Vert(T_{\Lambda}^\ast)^{\dagger}T_{\Lambda}^\ast f\Vert^2\\
&\leq \frac{1}{A}\Vert T_{\Lambda}^\ast f\Vert^2\\
&=\frac{1}{A}\sum_{j\in\Bbb J}v^2_j\Vert \Lambda_j \pi_{W_j}f\Vert^2.
\end{align*}
This implies that $\Lambda$ satisfies the lower g-fusion frame condition.
\end{proof}
Now, we can define a g-fusion frame sequence in the Hilbert space.
\begin{definition}
We say that $\Lambda$ is a \textit{g-fusion frame sequence}  if it is a g-fusion frame for    $\overline{\mbox{span}}\lbrace \pi_{W_j}\Lambda^*_j H_j\rbrace$.
\end{definition}
\begin{theorem}\label{2.6}
$\Lambda$ is a g-fusion frame sequence if and only if the operator
 \begin{align*}
T_{\Lambda}&:\mathscr{H}_2\longrightarrow H\\
T_{\Lambda}(\lbrace f_j\rbrace_{j\in\Bbb J})&=\sum_{j\in\Bbb J}v_j \pi_{W_j}\Lambda_{j}^{*}f_j
\end{align*}
is a well-defined and has closed range.
\end{theorem}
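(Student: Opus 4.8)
The plan is to work with the closed subspace $\mathcal{W}:=\overline{\mbox{span}}\lbrace \pi_{W_j}\Lambda^*_j H_j\rbrace$, for which $\Lambda$ is required to be a g-fusion frame. The first thing I would record is that $\mathcal{W}=\overline{\mathcal{R}_{T_{\Lambda}}}$: every finite sum $\sum_{j\in\Bbb I}\pi_{W_j}\Lambda_j^* g_j$ lies in $\mathcal{R}_{T_\Lambda}$ (feed in the sequence $f_j=v_j^{-1}g_j$), while $\mathcal{R}_{T_\Lambda}\subseteq\mathcal{W}$ termwise, so the two closures agree. Consequently $\mathcal{W}^\perp=\mathcal{R}_{T_\Lambda}^\perp=\mathcal{N}_{T_\Lambda^*}$ and $\mathcal{W}=\mathcal{N}_{T_\Lambda^*}^\perp$. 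The key structural observation I would prove next is that $\Lambda_j\pi_{W_j}g=0$ for every $g\in\mathcal{W}^\perp$ and every $j$: indeed $\langle\Lambda_j\pi_{W_j}g,h_j\rangle=\langle g,\pi_{W_j}\Lambda_j^*h_j\rangle=0$ since $\pi_{W_j}\Lambda_j^*h_j\in\mathcal{W}$. Hence $\Lambda_j\pi_{W_j}=\Lambda_j\pi_{W_j}\pi_{\mathcal{W}}$ on all of $H$, which is the bridge between the frame inequalities on $\mathcal{W}$ and the operator $T_\Lambda$ on all of $\mathscr{H}_2$.

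For the forward implication, assume $\Lambda$ is a g-fusion frame for $\mathcal{W}$ with bounds $A,B$. Using $\Lambda_j\pi_{W_j}f=\Lambda_j\pi_{W_j}\pi_{\mathcal{W}}f$ and the upper bound applied to $\pi_{\mathcal{W}}f\in\mathcal{W}$, I get $\sum_{j\in\Bbb J}v_j^2\|\Lambda_j\pi_{W_j}f\|^2\le B\|\pi_{\mathcal{W}}f\|^2\le B\|f\|^2$ for every $f\in H$; so $\Lambda$ is a g-fusion Bessel sequence and, by Theorem \ref{t2}, $T_\Lambda$ is well-defined and bounded, with $T_\Lambda^*f=\lbrace v_j\Lambda_j\pi_{W_j}f\rbrace$. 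The lower bound, valid on $\mathcal{W}=\mathcal{N}_{T_\Lambda^*}^\perp$, reads $\|T_\Lambda^*f\|^2=\sum_{j\in\Bbb J} v_j^2\|\Lambda_j\pi_{W_j}f\|^2\ge A\|f\|^2$, i.e. $T_\Lambda^*$ is bounded below on the orthogonal complement of its kernel. A standard Cauchy-sequence argument (if $T_\Lambda^*f_n\to y$ with $f_n\in\mathcal{N}_{T_\Lambda^*}^\perp$, then $\sqrt{A}\|f_n-f_m\|\le\|T_\Lambda^*f_n-T_\Lambda^*f_m\|\to0$) then shows $\mathcal{R}_{T_\Lambda^*}$ is closed, and by Lemma \ref{Ru}(1) so is $\mathcal{R}_{T_\Lambda}$.

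For the converse, suppose $T_\Lambda$ is well-defined, bounded, and has closed range. Boundedness gives, via Theorem \ref{t2}, the Bessel bound $\sum_{j\in\Bbb J} v_j^2\|\Lambda_j\pi_{W_j}f\|^2\le\|T_\Lambda\|^2\|f\|^2$ for all $f\in H$, in particular on $\mathcal{W}$; this is the upper g-fusion frame inequality. Since $\mathcal{R}_{T_\Lambda}$ is closed we have $\mathcal{R}_{T_\Lambda}=\overline{\mathcal{R}_{T_\Lambda}}=\mathcal{W}$, and Lemma \ref{l1} furnishes a pseudo-inverse $T_\Lambda^\dagger$ with $T_\Lambda T_\Lambda^\dagger f=f$ for all $f\in\mathcal{W}$. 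For $0\neq f\in\mathcal{W}$ I would estimate
\begin{align*}
\|T_\Lambda^*f\|&=\sup_{\|x\|=1}|\langle f,T_\Lambda x\rangle|\ge\Big|\Big\langle f,T_\Lambda\tfrac{T_\Lambda^\dagger f}{\|T_\Lambda^\dagger f\|}\Big\rangle\Big|=\frac{\|f\|^2}{\|T_\Lambda^\dagger f\|}\ge\frac{\|f\|}{\|T_\Lambda^\dagger\|},
\end{align*}
so that $\sum_{j\in\Bbb J} v_j^2\|\Lambda_j\pi_{W_j}f\|^2=\|T_\Lambda^*f\|^2\ge\|T_\Lambda^\dagger\|^{-2}\|f\|^2$ on $\mathcal{W}$. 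Together with the Bessel bound this says exactly that $\Lambda$ is a g-fusion frame for $\mathcal{W}$, i.e. a g-fusion frame sequence.

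The main obstacle I anticipate is the well-definedness of $T_\Lambda$ in the forward direction: the g-fusion frame sequence hypothesis only controls $\sum_{j\in\Bbb J} v_j^2\|\Lambda_j\pi_{W_j}f\|^2$ for $f\in\mathcal{W}$, whereas boundedness of $T_\Lambda$ (equivalently the Bessel condition of Theorem \ref{t2}) must hold for all $f\in H$. The vanishing identity $\Lambda_j\pi_{W_j}\big|_{\mathcal{W}^\perp}=0$ is what removes this gap, by collapsing the sum over $H$ to the same sum over $\mathcal{W}$. Everything else is bookkeeping with the pseudo-inverse identities of Lemmas \ref{l1} and \ref{Ru} and the characterization of closed range via boundedness below on the complement of the kernel.
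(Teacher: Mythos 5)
Your proof is correct, and it is in fact more complete than the paper's. The paper's entire argument consists of a reduction: it proves only that when $\mathcal{R}_{T_\Lambda}$ is closed it equals $\overline{\mbox{span}}\lbrace \pi_{W_j}\Lambda^*_j H_j\rbrace=\mathcal{W}$, and then declares everything else settled "by Theorem \ref{2.3}." But Theorem \ref{2.3} is stated for g-fusion frames for $H$ with $T_\Lambda$ surjective onto $H$; invoking it with the closed subspace $\mathcal{W}$ in place of $H$ requires exactly the two ingredients you supply and the paper never mentions: (i) the vanishing identity $\Lambda_j\pi_{W_j}\big|_{\mathcal{W}^\perp}=0$, equivalently $\Lambda_j\pi_{W_j}=\Lambda_j\pi_{W_j}\pi_{\mathcal{W}}$, which is what lets the frame-sequence hypothesis (inequalities only for $f\in\mathcal{W}$) produce a Bessel bound on all of $H$ and hence well-definedness of $T_\Lambda$ on $\mathscr{H}_2$ --- the forward direction, which the paper's proof does not address at all; and (ii) closedness of $\mathcal{R}_{T_\Lambda}$ in the forward direction, which you obtain by the standard boundedness-below-on-$\mathcal{N}_{T_\Lambda^*}^\perp$ Cauchy argument together with Lemma \ref{Ru}(1). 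Your converse, with the pseudo-inverse estimate $\Vert T_\Lambda^* f\Vert\geq\Vert f\Vert/\Vert T_\Lambda^\dagger\Vert$ for $f\in\mathcal{W}=\mathcal{R}_{T_\Lambda}$, is precisely the subspace adaptation of the paper's proof of Theorem \ref{2.3}, so in skeleton the two arguments agree; yours simply carries out the adaptation instead of citing a theorem that does not literally apply. You also avoid the paper's small slip of asserting that $f\in\mathcal{R}_{T_\Lambda}$ lies in $\mbox{span}\lbrace \pi_{W_j}\Lambda^*_j H_j\rbrace$ (an infinite sum lies only in the closed span, though this does not affect the conclusion). One phrasing nit: in your converse you assume $T_\Lambda$ "well-defined, bounded," but boundedness is not part of the hypothesis as the theorem states it; it follows from well-definedness via Theorem \ref{t2} ((3)$\Rightarrow$(1)$\Rightarrow$(2)), which is evidently what you intend.
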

\begin{proof}
By Theorem \ref{2.3}, it is enough to prove that if $T_{\lambda}$ has closed range, then $\overline{\mbox{span}}\lbrace \pi_{W_j}\Lambda^*_j H_j\rbrace=\mathcal{R}_{T_{\Lambda}}$.
Let $f\in\overline{\mbox{span}}\lbrace \pi_{W_j}\Lambda^*_j H_j\rbrace$, then
$$f=\lim_{n\rightarrow\infty}g_n , \ \ \ g_n\in\mbox{span}\lbrace \pi_{W_j}\Lambda^*_j H_j\rbrace\subseteq \mathcal{R}_{T_{\Lambda}}=\overline{\mathcal{R}}_{T_{\Lambda}}.$$
Therefore, $\overline{\mbox{span}}\lbrace \pi_{W_j}\Lambda^*_j H_j\rbrace\subseteq\overline{\mathcal{R}}_{T_{\Lambda}}=\mathcal{R}_{T_{\Lambda}}$. On the other hand, if $f\in\mathcal{R}_{T_{\Lambda}}$, then
$$f\in\mbox{span}\lbrace \pi_{W_j}\Lambda^*_j H_j\rbrace\subseteq\overline{\mbox{span}}\lbrace \pi_{W_j}\Lambda^*_j H_j\rbrace$$
 and the proof is completed.
\end{proof}
\begin{theorem}
 $\Lambda$ is a g-fusion frame sequence if and only if
\begin{equation}\label{4}
f \longmapsto \lbrace v_j \Lambda_j \pi_{W_j}f\rbrace_{j\in\Bbb J}
\end{equation}
defines a map from $H$ onto a closed subspace of $\mathscr{H}_2$.
\end{theorem}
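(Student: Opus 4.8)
The plan is to recognize that the map in (\ref{4}) is precisely the analysis operator $T_\Lambda^*$, and then to transfer the closed-range property back and forth between $T_\Lambda$ and $T_\Lambda^*$ via Lemma \ref{Ru}(1), so that the statement collapses onto the characterization already obtained in Theorem \ref{2.6}.

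First I would treat the forward implication. Assuming $\Lambda$ is a g-fusion frame sequence, Theorem \ref{2.6} tells me that $T_\Lambda$ is well-defined with closed range; in particular $\Lambda$ is a g-fusion Bessel sequence on $\overline{\mbox{span}}\lbrace \pi_{W_j}\Lambda^*_j H_j\rbrace$, so by Theorem \ref{t2} the synthesis operator $T_\Lambda$ is bounded and its adjoint $T_\Lambda^*$ exists, is bounded, and satisfies $T_\Lambda^*(f)=\lbrace v_j\Lambda_j\pi_{W_j}f\rbrace_{j\in\Bbb J}$. Thus the assignment (\ref{4}) is exactly $T_\Lambda^*$ and carries $H$ into $\mathscr{H}_2$. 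Since $\mathcal{R}_{T_\Lambda}$ is closed, Lemma \ref{Ru}(1) gives that $\mathcal{R}_{T_\Lambda^*}$ is closed in $\mathscr{H}_2$, so (\ref{4}) maps $H$ onto the closed subspace $\mathcal{R}_{T_\Lambda^*}$.

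For the converse, suppose (\ref{4}) defines a map from $H$ onto a closed subspace of $\mathscr{H}_2$. Well-definedness means $\sum_{j\in\Bbb J} v_j^2\Vert\Lambda_j\pi_{W_j}f\Vert^2<\infty$ for each $f\in H$, and I would identify the resulting operator with $T_\Lambda^*$ (so that, dually, $T_\Lambda$ is well-defined). The hypothesis that the range of (\ref{4}) is closed says $\mathcal{R}_{T_\Lambda^*}$ is closed, whence Lemma \ref{Ru}(1) yields that $\mathcal{R}_{T_\Lambda}$ is closed. Having established that $T_\Lambda$ is well-defined with closed range, Theorem \ref{2.6} immediately gives that $\Lambda$ is a g-fusion frame sequence.

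The only genuinely delicate point I anticipate is the bookkeeping around well-definedness versus boundedness: to invoke Lemma \ref{Ru}(1) one needs honest bounded operators $T_\Lambda$ and $T_\Lambda^*$ between Hilbert spaces, so the step that must be handled carefully is verifying that the mere well-definedness of (\ref{4}) upgrades to a bounded operator identifiable with $T_\Lambda^*$. This can be secured by the closed graph theorem, or equivalently by the Banach--Steinhaus argument already used in the proof of Theorem \ref{t2}. Once that identification is in place, the remainder is a direct application of the adjoint closed-range equivalence together with Theorem \ref{2.6}.
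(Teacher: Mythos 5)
Your proof is correct and follows the paper's route in its overall architecture: both you and the paper identify the map (\ref{4}) with the analysis operator $T_\Lambda^*$, shuttle the closed-range property between $T_\Lambda$ and $T_\Lambda^*$ via Lemma \ref{Ru}(1), and reduce both implications to Theorem \ref{2.6}. The one point of genuine divergence is the boundedness step in the converse, and there your version is actually more careful than the printed one. The paper sets $B:=\sup\bigl\lbrace \sum_{j\in\Bbb J}v_j^2\Vert \Lambda_j\pi_{W_j}f\Vert^2 : \Vert f\Vert=1\bigr\rbrace$ and then verifies the Bessel condition on finite subsets via Corollary \ref{cor}; but it never argues that this supremum is finite, and pointwise finiteness of $\sum_{j\in\Bbb J}v_j^2\Vert\Lambda_j\pi_{W_j}f\Vert^2$ for each $f$ does not by itself bound the supremum over the unit sphere. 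Your closed graph argument (or Banach--Steinhaus applied to the truncated analysis maps, as in the proof of Theorem \ref{t2}) supplies exactly this missing uniform bound: if $f_n\to f$ in $H$ and $\lbrace v_j\Lambda_j\pi_{W_j}f_n\rbrace\to\lbrace g_j\rbrace$ in $\mathscr{H}_2$, then coordinatewise convergence together with continuity of each $\Lambda_j\pi_{W_j}$ forces $g_j=v_j\Lambda_j\pi_{W_j}f$, so the map is closed, hence bounded, Besselness of $\Lambda$ follows, and the identification of the map with $T_\Lambda^*$ is then legitimate through Theorem \ref{t2}. So the step you flagged as the ``only genuinely delicate point'' is precisely the spot where the paper's own proof is implicitly incomplete, and your treatment patches it; otherwise the two arguments coincide.
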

\begin{proof}
Let $\Lambda$ be a g-fusion frame sequence. Then, by Theorem \ref{2.6}, $T_{\lambda}$ is well-defined and $\mathcal{R}_{T_{\Lambda}}$ is closed. So, $T^*_{\Lambda}$ is well-defined and has closed range. Conversely, by hypothesis, for all $f\in H$
$$\sum_{j\in\Bbb J}\Vert v_j \Lambda_j \pi_{W_j}f\Vert^2<\infty.$$
Let
$$B:=\sup\big\lbrace \sum_{j\in\Bbb J}v_j^2\Vert \Lambda_j \pi_{W_j}f\Vert^2 : \ \ f\in H, \ \Vert f\Vert=1\big\rbrace$$
and suppose that $g_j\in H_j$ and $\Bbb I\subseteq\Bbb J$ be finite. We can write
\begin{align*}
\Vert\sum_{j\in\Bbb I}v_j \pi_{W_j}\Lambda^*_j g_j\Vert^2&=\Big(\sup_{\Vert f\Vert=1}\big\vert\langle\sum_{j\in\Bbb I}v_j \pi_{W_j}\Lambda^*_j g_j, f\rangle\big\vert\Big)^2\\
&\leq\Big(\sup_{\Vert f\Vert=1}\sum_{j\in\Bbb I}v_j\big\vert\langle  g_j, \Lambda_j \pi_{W_j}f\rangle\big\vert\Big)^2\\
&\leq\big(\sum_{j\in\Bbb I}\Vert g_j\Vert^2\big)\big(\sup_{\Vert f\Vert=1}\sum_{j\in\Bbb I}v_j^2\Vert \Lambda_j \pi_{W_j}f\Vert^2\big)\\
&\leq B\big(\sum_{j\in\Bbb I}\Vert g_j\Vert^2\big)
\end{align*}
Thus, by Corollary \ref{cor}, $\Lambda$ is a g-fusion Bessel sequence for $H$. Therefore, $T_{\Lambda}$ is well-defined and bounded. Furthermore, if the range of the map  (\ref{4}) is closed, the same is true for $T_{\Lambda}$. So, by Theorem \ref{2.6}, $\Lambda$ is a g-fusion frame sequence.
\end{proof}
\begin{theorem}
Let $\Lambda=(W_j, \Lambda_j, v_j)$ be a g-fusion frame sequence Then, it is a g-fusion frame for $H$ if and only if the map
\begin{equation}\label{5}
f \longmapsto \lbrace v_j \Lambda_j \pi_{W_j}f\rbrace_{j\in\Bbb J}
\end{equation}
from $H$ onto a closed subspace of $\mathscr{H}_2$ be injective.
\end{theorem}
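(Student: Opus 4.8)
The plan is to translate both sides of the equivalence into properties of the synthesis operator $T_{\Lambda}$ and its adjoint, and then exploit the closed-range information that the g-fusion frame sequence hypothesis provides. First I would observe that the map appearing in (\ref{5}) is precisely the analysis operator $T_{\Lambda}^*$, since we already computed $T_{\Lambda}^*(f)=\lbrace v_j\Lambda_j\pi_{W_j}f\rbrace_{j\in\Bbb J}$. Because $\Lambda$ is assumed to be a g-fusion frame sequence, Theorem \ref{2.6} guarantees that $T_{\Lambda}$ is well-defined with closed range, and then Lemma \ref{Ru}(1) forces $\mathcal{R}_{T_{\Lambda}^*}$ to be closed as well. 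Consequently the clause ``onto a closed subspace of $\mathscr{H}_2$'' in the statement is automatic under the standing hypothesis, and the only genuine condition to analyze is the injectivity of $T_{\Lambda}^*$.

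Next I would invoke Theorem \ref{2.3}, which asserts that $\Lambda$ is a g-fusion frame for $H$ exactly when $T_{\Lambda}$ is surjective, i.e. $\mathcal{R}_{T_{\Lambda}}=H$. This reduces the whole statement to the purely operator-theoretic equivalence: assuming $T_{\Lambda}$ has closed range, $T_{\Lambda}$ is surjective if and only if $T_{\Lambda}^*$ is injective. The engine for this step is the standard orthogonal decomposition $\overline{\mathcal{R}_{T_{\Lambda}}}=\mathcal{N}_{T_{\Lambda}^*}^{\bot}$. Since the range is closed we have $\mathcal{R}_{T_{\Lambda}}=\mathcal{N}_{T_{\Lambda}^*}^{\bot}$, so $\mathcal{R}_{T_{\Lambda}}=H$ holds if and only if $\mathcal{N}_{T_{\Lambda}^*}=\lbrace 0\rbrace$, that is, if and only if the map (\ref{5}) is injective. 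Chaining these equivalences completes the argument.

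The one place that requires care — and where the g-fusion frame sequence hypothesis is essential rather than cosmetic — is the closedness of $\mathcal{R}_{T_{\Lambda}}$. Without it one obtains only $\overline{\mathcal{R}_{T_{\Lambda}}}=\mathcal{N}_{T_{\Lambda}^*}^{\bot}$, and injectivity of $T_{\Lambda}^*$ would yield merely a dense range, not surjectivity. I would therefore record explicitly that Theorem \ref{2.6} supplies the closed range before passing from ``dense'' to ``all of $H$,'' so that the upgrade to $\mathcal{R}_{T_{\Lambda}}=H$ is legitimate. No delicate estimates are needed; the proof is a clean bookkeeping of the earlier characterizations together with the adjoint relation between range and kernel for closed-range operators.
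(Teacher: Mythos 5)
Your proof is correct, but it takes a genuinely more operator-theoretic route than the paper. The paper's own argument never mentions closed ranges or the synthesis operator: it observes that injectivity of the map (\ref{5}) is literally the kernel condition $\lbrace f:\Lambda_j\pi_{W_j}f=0,\ j\in\Bbb J\rbrace=\lbrace 0\rbrace$, which was noted earlier to be equivalent to gf-completeness, i.e.\ to $\overline{\mbox{span}}\lbrace \pi_{W_j}\Lambda^*_j H_j\rbrace=H$; since a g-fusion frame sequence is by definition a g-fusion frame for this closed span, the frame property on all of $H$ follows immediately, and the converse is dismissed as ``evident.'' You instead route everything through $T_{\Lambda}$: Theorem \ref{2.6} for closedness of $\mathcal{R}_{T_{\Lambda}}$, Lemma \ref{Ru}(1) for closedness of $\mathcal{R}_{T_{\Lambda}^*}$, Theorem \ref{2.3} for the surjectivity characterization, and the duality $\overline{\mathcal{R}_{T_{\Lambda}}}=\mathcal{N}_{T_{\Lambda}^*}^{\bot}$. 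The two arguments rest on the same underlying identity --- $\mathcal{N}_{T_{\Lambda}^*}$ is exactly $\lbrace f:\Lambda_j\pi_{W_j}f=0,\ j\in\Bbb J\rbrace$ and $\overline{\mathcal{R}_{T_{\Lambda}}}=\overline{\mbox{span}}\lbrace \pi_{W_j}\Lambda^*_j H_j\rbrace$, so gf-completeness \emph{is} injectivity of the analysis operator --- but your version makes explicit several things the paper leaves implicit: that the clause ``onto a closed subspace'' is automatic under the frame sequence hypothesis, an actual proof of the ``evident'' converse, and the precise point where closedness of the range is needed to upgrade dense range to surjectivity. The paper's version buys brevity and avoids the pseudo-inverse/closed-range machinery entirely. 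One small point worth recording in your write-up: identifying the map (\ref{5}) with $T_{\Lambda}^*$ tacitly uses the Bessel property on all of $H$ (a frame sequence a priori gives bounds only on the closed span); this does hold here, both because $\Lambda_j\pi_{W_j}$ vanishes on $\big(\overline{\mbox{span}}\lbrace \pi_{W_j}\Lambda^*_j H_j\rbrace\big)^{\perp}$ and because Theorem \ref{2.6} together with Theorem \ref{t2} already yields that $T_{\Lambda}$ is well-defined and bounded, but it deserves a sentence.
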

\begin{proof}
Suppose that the map (\ref{5}) is injective and $v_j \Lambda_j \pi_{W_j}f=0$ for all $j\in\Bbb J$. Then, the value of the map at $f$ is zero. So, $f=0$. This means that $(W_j, \Lambda_j)$ is gf-complete. Since, $\Lambda$ is a g-fusion frame sequence,  so, it is a g-fusion frame for $H$.

The converse is evident.
\end{proof}
\begin{theorem}
Let $\Lambda$ be a g-fusion frame for $H$ and $u\in\mathcal{B}(H)$. Then $\Gamma:=(uW_j, \Lambda_j u^*, v_j)$ is a g-fusion frame sequence if and only if $u$ has closed range.
\end{theorem}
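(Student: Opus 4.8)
The plan is to reduce the whole statement to Theorem \ref{2.6}, which says that a triple is a g-fusion frame sequence precisely when its synthesis operator is well-defined and has closed range. Applied to $\Gamma$, it then suffices to prove that the synthesis operator $T_{\Gamma}$ of $\Gamma$ is well-defined with closed range if and only if $u$ has closed range. The single identity carrying the argument will be $T_{\Gamma}=uT_{\Lambda}$.

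First I would write $T_{\Gamma}$ explicitly. The component operators of $\Gamma$ are $\Lambda_j u^{*}$, with adjoints $(\Lambda_j u^{*})^{*}=u\Lambda_j^{*}$, so for a finitely supported $\{f_j\}_{j\in\Bbb J}$ one has $T_{\Gamma}(\{f_j\})=\sum_{j\in\Bbb J}v_j\,\pi_{uW_j}u\Lambda_j^{*}f_j$. The main computation is the projection identity $u\pi_{W_j}=\pi_{uW_j}u\pi_{W_j}$, the analogue for an arbitrary $u\in\mathcal{B}(H)$ of the identity $\pi_{uV}u\pi_{V}=u\pi_{V}$ recalled in the introduction (it holds because $ux\in\overline{uW_j}$ whenever $x\in W_j$, while $u\pi_{W_j}$ annihilates $W_j^{\perp}$, with $\pi_{uW_j}$ read as the projection onto $\overline{uW_j}$). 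Invoking it termwise gives $\pi_{uW_j}u\Lambda_j^{*}=u\pi_{W_j}\Lambda_j^{*}$, hence $T_{\Gamma}(\{f_j\})=u\sum_{j\in\Bbb J}v_j\pi_{W_j}\Lambda_j^{*}f_j=uT_{\Lambda}(\{f_j\})$. Since $\|uT_{\Lambda}(\{f_j\})\|\le\|u\|\,\|T_{\Lambda}\|\,\|\{f_j\}\|$ and $T_{\Lambda}$ is bounded by Theorem \ref{t2}, this extends from finite sequences to a bounded operator on all of $\mathscr{H}_2$, so the equality $T_{\Gamma}=uT_{\Lambda}$ holds and $T_{\Gamma}$ is automatically well-defined.

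Next I would transfer closedness through ranges. As $\Lambda$ is a g-fusion frame, Theorem \ref{2.3} gives that $T_{\Lambda}$ is surjective, so $\mathcal{R}_{T_{\Lambda}}=H$ and therefore $\mathcal{R}_{T_{\Gamma}}=u(\mathcal{R}_{T_{\Lambda}})=u(H)=\mathcal{R}_{u}$. In particular $\mathcal{R}_{T_{\Gamma}}$ is closed if and only if $\mathcal{R}_{u}$ is closed. One then concludes with Theorem \ref{2.6}: if $u$ has closed range, $T_{\Gamma}$ is well-defined with closed range, so $\Gamma$ is a g-fusion frame sequence; conversely, if $\Gamma$ is a g-fusion frame sequence, $T_{\Gamma}$ has closed range, whence $\mathcal{R}_{u}=\mathcal{R}_{T_{\Gamma}}$ is closed. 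This settles both directions at once.

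I expect the only real obstacle to be the clean justification of $T_{\Gamma}=uT_{\Lambda}$, and precisely the step $\pi_{uW_j}u\Lambda_j^{*}=u\pi_{W_j}\Lambda_j^{*}$. The delicate point is that $\Lambda_j^{*}$ need not map $H_j$ into $W_j$, so one cannot freely insert $\pi_{W_j}$: the identity $u\pi_{W_j}=\pi_{uW_j}u\pi_{W_j}$ only yields the desired form once the inner projection is present. This is why the argument must be run with the effective action $\Lambda_j\pi_{W_j}$ that the g-fusion frame actually sees (so that $\Lambda_j^{*}=\pi_{W_j}\Lambda_j^{*}$), and I would record that reduction with care before performing the range computation, since it is the one place where the passage from $\Lambda$ to $\Gamma$ could fail to be transparent. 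With the operator identity secured, the appeals to Theorems \ref{2.3} and \ref{2.6} are entirely routine.
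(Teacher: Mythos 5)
Your proposal is correct and takes essentially the same route as the paper's own proof: the operator identity $T_{\Gamma}=uT_{\Lambda}$, surjectivity of $T_{\Lambda}$ from Theorem \ref{2.3} (giving $\mathcal{R}_{T_{\Gamma}}=u(H)=\mathcal{R}_{u}$), and the closed-range criterion of Theorem \ref{2.6} in both directions, with Theorem \ref{t2} supplying well-definedness. The only place you go beyond the printed argument is the step $\pi_{uW_j}u\Lambda_j^{*}=u\pi_{W_j}\Lambda_j^{*}$, which the paper performs silently but which, as you correctly note, requires $\mathcal{R}_{\Lambda_j^{*}}\subseteq W_j$ (i.e.\ the normalization $\Lambda_j^{*}=\pi_{W_j}\Lambda_j^{*}$, harmless for $\Lambda$ but affecting the literal meaning of $\Gamma$), so your explicit recording of that reduction makes the proof more careful than the paper's.
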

\begin{proof}
Assume that $\Gamma$ is a g-fusion frame sequence. So, by Theorem \ref{2.6}, $T_{\Lambda u^*}$ is a well-defined operator from $\mathscr{H}_2$ into $H$ with closed range. If $\lbrace f_j\rbrace_{j\in\Bbb J}\in\mathscr{H}_2$, then
\begin{align*}
uT_{\Lambda}\lbrace f_j\rbrace_{j\in\Bbb J}&=\sum_{j\in\Bbb J}v_ju\pi_{W_j}\Lambda_j^* f_j\\
&=\sum_{j\in\Bbb J}v_j\pi_{uW_j}u\Lambda_j^* f_j\\
&=\sum_{j\in\Bbb J}v_j\pi_{uW_j}(\Lambda_j u^*)^* f_j\\
&=T_{\Lambda u^*}\lbrace f_j\rbrace_{j\in\Bbb J},
\end{align*}
therefore $uT_{\Lambda}=T_{\Lambda u^*}$. Thus $uT_{\Lambda}$ has closed range too. Let $y\in\mathcal{R}_u$, then there is $x\in H$ such that $u(x)=y$. By Theorem \ref{2.3}, $T_{\Lambda}$ is surjective, so there exist $\{f_j\}_{j\in\Bbb J}\in\mathscr{H}_2$ such that 
$$y=u(T_{\Lambda}\{f_j\}_{j\in\Bbb J}).$$
Thus, $\mathcal{R}_{u}=\mathcal{R}_{uT_{\Lambda}}$  and $u$ has closed range.

For the opposite implication, let
\begin{align*}
T_{\Lambda u^*}:&\mathscr{H}_2\longrightarrow H\\
T_{\Lambda u^*}\lbrace f_j\rbrace_{j\in\Bbb J}&=\sum_{j\in\Bbb J}v_j\pi_{uW_j}(\Lambda_j u^*)^* f_j.
\end{align*}
Hence, $T_{\Lambda u^*}=uT_{\Lambda}$. Since, $T_{\Lambda}$ is surjective, so $T_{\Lambda u^*}$ has closed range and by Theorem \ref{t2}, is well-defined. Therefore, by Theorem \ref{2.6}, the proof is completed.
\end{proof}
\section{Conclusions}
In this paper, we could transfer some common properties in general frames to g-fusion frames with the definition of the g-fusion frames and their operators. Afterward, we reviewed a basic theorem about deleting a member in Theorem \ref{del} with the definition of the dual g-fusion frames and the gf-completeness. In this theorem,  the defined operator in part  \textit{3} could be replaced  by some other operators which are the same as the parts  \textit{1} and \textit{2}; this is an open problem at the moment. Eventually, the g-fusion frame sequences  and their relationship with the closed range operators were defined and presented.

\end{document}